\newtheorem{theorem}{Theorem}
\newtheorem{lemma}{Lemma}
\newtheorem{proposition}{Proposition}
\newtheorem{remark}{Remark}
\newenvironment{proof}[1][Proof]{\noindent\textbf{#1.} }{\ \rule{0.5em}{0.5em}}
\begin{document}

\title{$q$-Sturm-Liouville theory and the corresponding eigenfunction expansions}
\author{Lazhar Dhaouadi\thanks{Ecole Pr\'{e}paratoire d'Ing\'{e}nieur, Bizerte,
Tunisia. E-mail lazhardhaouadi@yahoo.fr}}
\date{}
\maketitle

\begin{abstract}
The aim of this paper is to study the $q$-Schr\"{o}dinger operator
$$
L= q(x)-\Delta_q,
$$
where $q(x)$ is a given function of $x$  defined over
$\mathbb{R}_{q}^{+}=\{q^n,\quad n\in\mathbb Z\}$ and $\Delta_q$ is
the $q$-Laplace operator
$$
\Delta_{q}f(x)=\frac{1}{x^{2}}\left[
f(q^{-1}x)-\frac{1+q}{q}f(x)+\frac{1}{q}f(qx)\right].
$$
\end{abstract}

\section{Introduction}

After the spectral analysis in \cite{D} of the $q$-Laplace operator
also called the second-order $q$-difference operator
$$
\Delta_{q}f(x)=\frac{1}{x^{2}}\left[
f(q^{-1}x)-\frac{1+q}{q}f(x)+\frac{1}{q}f(qx)\right],
$$
it is natural to study the perturbed operator
$$
L= q(x)-\Delta_q.
$$

The eigenfunction expansion theory for the $q$-Sturm-Liouville
equation (singular case) presented in this paper is based on the
original works of Hermann Weyl in 1910 and of Edward Charles
Titchmarsh in 1941, concerning Sturm-Liouville theory and the
corresponding eigenfunction expansions. For this account the
essential results of Weyl concern the regular, limit-circle and
limit-point classifications of Sturm-Liouville differential
equations (singular case); the eigenfunction expansion theory from
Titchmarsh is based on classical function theory methods, in
particular complex function theory. For more information on the
classical theory, the reader can consult the references
\cite{H,S,T,W} .

\section{Basic definitions}

Consider $0<q<1$. In what follows, the standard conventional notations from
\cite{G}, will be used
$$
\mathbb{R}_{q}^{+}=\{q^{n},\quad n\in\mathbb{Z}\}
$$
$$
(a,q)_{0}=1,\quad(a,q)_{n}=\prod_{i=0}^{n-1}(1-aq^{i}).
$$
The $q$-schift operator is
$$
\Lambda_{q}f(x)=f(qx).
$$
Next we introduce two concepts of $q$-analysis: the $q$-derivative and the
$q$-integral. The $q$-derivative of a function $f$ is defined by
$$
D_{q}f(x)=\frac{f(x)-f(qx)}{(1-q)x},
$$
and $D_{q}f(0)=f^{\prime}(0)$, provided $f^{\prime}(0)$ exists. The
second order $q$-difference operator is
$$
{\small \Delta_{q}f(x)=\left[  \frac{1-q}{q}\right]  ^{2}\Lambda_{q}^{-1}%
D_{q}^{2}f(x)=\frac{1}{x^{2}}\left[  f(q^{-1}x)-\frac{1+q}{q}f(x)+\frac{1}%
{q}f(qx)\right]  .}
$$
The product rule for the $q$-derivative is
$$
D_{q}(fg)(x)=D_{q}f(x)g(x)+\Lambda_{q}f(x)D_{q}g(x).
$$
Jackson's $q$-integral (see \cite{J}) in the interval $[a,b]$ and in the
interval $[0,\infty\lbrack$ are defined by%

\begin{align*}
\int_{a}^{b}f(x)d_{q}x  &  =(1-q)\sum_{n=0}^{\infty}q^{n}[bf(bq^{n}
)-af(aq^{n})]\\
\int_{0}^{\infty}f(x)d_{q}x  &  =(1-q)\sum_{n=-\infty}^{\infty}q^{n}f(q^{n}).
\end{align*}
Also the rule of $q$-integration by parts is given by
$$
\int_{a}^{b}D_{q}f(x)g(x)d_{q}x=[f(b)g(b)-f(a)g(a)]-\int_{a}^{b}\Lambda
_{q}f(x)D_{q}g(x)d_{q}x.
$$
In the end we denote by $L^{2}(\mathbb{R}_q^+)$ the Hilbert space of
functions $f$ defined on $\mathbb R_q^+$ and satisfy
$$
\int_0^\infty |f(x)|^2d_qx<\infty.
$$

\section{$q$-Sturm-Liouville difference equation}

If $F$  satisfies the $q$-difference equation
\begin{equation}
\Delta_{q}f(x)+[\lambda-q(x)]f(x)=0, \label{1}
\end{equation}
and $G$ the same equation with $\lambda^{\prime}$ instead of
$\lambda$, then
\begin{align*}
&  (\lambda^{\prime}-\lambda)\int_{0}^{b}F(x)G(x)d_{q}x\\
&  =\int_{0}^{b}[F(x)\left\{  q(x)G(x)-\Delta_{q}G(x)\right\}  -G(x)\left\{
q(x)F(x)-\Delta_{q}F(x)\right\}  ]d_{q}x\\
&  =-\int_{0}^{b}\left\{
F(x)\Delta_{q}G(x)-G(x)\Delta_{q}F(x)\right\}
d_{q}x=W_{0}(F,G)-W_{b}(F,G),
\end{align*}
where $W_{x}$ is the $q$-Wronskian defined by
$$
W_{x}(F,G)=\frac{(1-q)^{2}}{q}\left[  F(x)\Lambda_{q}^{-1}D_{q}
G(x)-G(x)\Lambda_{q}^{-1}D_{q}F(x)\right]  .
$$
Note that the $q$-Wronskian defined here is slightly different from
the $q$-Wronskian introduced in \cite{Sw} and in \cite{D}. In some
cases we write $W\Big[F(x),G(x)\Big]$ instead of $W_x(F,G)$.

\bigskip
If $\lambda=\mu+i\nu$, $\lambda^{\prime}=\overline{\lambda}$ and
$G=\overline{F}$, this gives%
\begin{equation}
2\nu\int_{0}^{b}\left\vert F(x)\right\vert ^{2}d_{q}x=iW_{0}(F,\overline
{F})-iW_{b}(F,\overline{F}). \label{2}%
\end{equation}
Now let $\phi(x)=\phi(x,\lambda)$ and $\theta(x)=\theta(x,\lambda)$ be tows
solutions of (1) such that%
$$
\left\{
\begin{array}
[c]{c}%
\phi(0)=\frac{\sqrt{q}}{1-q}\sin\alpha,\quad\phi^{\prime}(0)=-\frac{\sqrt{q}}{1-q}\cos\alpha\\
\theta(0)=\frac{\sqrt{q}}{1-q}\cos\alpha,\quad\theta^{\prime}(0)=\frac{\sqrt{q}}{1-q}\sin\alpha
\end{array}
\right.  ,
$$
where $\alpha$ is real. Then it follows that

\begin{theorem}
For every value of $\lambda$ other than real values, there exist a
constante $m(\lambda)$ such that  (\ref{1}) has a solution
$$
\psi(x,\lambda)=\theta(x,\lambda)+m(\lambda)\phi(x,\lambda)
$$
belonging to $L^{2}(\mathbb{R}_{q}^{+})$.
\end{theorem}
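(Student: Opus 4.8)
The plan follows the classical Weyl construction, adapted to the $q$-difference setting. The key identity is the Green/Wronskian formula already derived in the excerpt, which in the complex case reads
$$
2\nu\int_{0}^{b}\left\vert F(x)\right\vert ^{2}d_{q}x=iW_{0}(F,\overline{F})-iW_{b}(F,\overline{F}),
$$
valid for any solution $F$ of (\ref{1}) with $\lambda=\mu+i\nu$ and $\nu\neq0$. I would first consider, for a fixed real parameter $\beta$, the solution
$$
\psi_{b}(x)=\theta(x,\lambda)+\ell\,\phi(x,\lambda)
$$
where $\ell=\ell(b,\beta)$ is chosen so that $\psi_{b}$ satisfies a real boundary condition of the form $\cos\beta\,\psi_{b}(b)+\sin\beta\,(\Lambda_{q}^{-1}D_{q}\psi_{b})(b)=0$ at the truncated endpoint $b=q^{-N}$. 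Solving this linear condition for $\ell$ expresses $\ell$ as a M\"{o}bius (fractional linear) function of the complex variable $z=\cot\beta$; as $\beta$ ranges over the reals, $z$ traces the real axis and $\ell$ traces a circle $C_{b}$ in the complex plane.

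The main steps are then as follows. First I would show that each $C_{b}$ is a genuine circle and compute its equation: the condition $W_{b}(\psi_{b},\overline{\psi_{b}})=0$ (forced because $\psi_{b}$ meets a real boundary condition at $b$, so $\psi_{b}$ and $\overline{\psi_{b}}$ are proportional there) combined with the identity above gives
$$
2\nu\int_{0}^{b}\left\vert \psi_{b}(x)\right\vert ^{2}d_{q}x=iW_{0}(\psi_{b},\overline{\psi_{b}}),
$$
and expanding $W_{0}$ using the prescribed initial values of $\theta$ and $\phi$ yields a relation in $\ell$ and $\overline{\ell}$ of the form $|\ell-\text{center}|=\text{radius}$. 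Second, I would prove the nesting property: if $b'>b$ then $C_{b'}$ lies inside the closed disk bounded by $C_{b}$. This is the heart of the argument and rests on observing that a point $\ell$ lies inside or on $C_{b}$ precisely when
$$
\int_{0}^{b}\left\vert \theta(x)+\ell\,\phi(x)\right\vert ^{2}d_{q}x\leq\frac{\operatorname{Im}\ell}{\nu},
$$
so enlarging the interval of $q$-integration only shrinks the admissible set of $\ell$. Since the disks are nested and their radii are positive, the intersection $\bigcap_{b}\overline{D_{b}}$ is nonempty; I define $m(\lambda)$ to be any point of this intersection (the limit point, or a point of the limiting circle).

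Finally, for $\ell=m(\lambda)$ the displayed inequality holds for every $b$, and letting $b=q^{-N}\to\infty$ gives
$$
\int_{0}^{\infty}\left\vert \theta(x)+m(\lambda)\,\phi(x)\right\vert ^{2}d_{q}x\leq\frac{\operatorname{Im}m(\lambda)}{\nu}<\infty,
$$
which is exactly the statement that $\psi(x,\lambda)=\theta(x,\lambda)+m(\lambda)\phi(x,\lambda)$ belongs to $L^{2}(\mathbb{R}_{q}^{+})$. The step I expect to be the main obstacle is the rigorous verification that the truncated boundary condition at $b=q^{-N}$ produces a well-defined M\"{o}bius transformation with nonvanishing denominator, and that the resulting curve is a full circle (not degenerate) for every finite $b$; this requires controlling the $q$-Wronskian $W_{b}(\phi,\overline{\phi})$ and checking it does not vanish, using $\nu\neq0$ together with the initial-value normalization. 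A secondary subtlety is that, because $\mathbb{R}_{q}^{+}$ is discrete and the "endpoint'' is the lattice point $q^{-N}$, I must make sure the $q$-integration-by-parts and Wronskian manipulations are carried out with the correct discrete boundary terms rather than naively importing the continuous Weyl argument.
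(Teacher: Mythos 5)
Your proposal is essentially the paper's own proof: the same truncated real boundary condition at $b$, the same M\"{o}bius map in $z=\cot\beta$ producing the Weyl circles $C_{b}$, the same nesting argument driven by the $q$-Wronskian identity (\ref{2}), and the same passage to the limit point (or a point of the limit circle) to define $m(\lambda)$ and conclude square-$q$-integrability. The only correction needed is a sign: since $iW_{0}(\theta+\ell\phi,\overline{\theta}+\overline{\ell}\,\overline{\phi})=-2\operatorname{Im}\ell$, the interior of $C_{b}$ (for $\nu>0$) is characterized by $\int_{0}^{b}\left\vert \theta+\ell\phi\right\vert ^{2}d_{q}x<-\operatorname{Im}\ell/\nu$ and the final bound is $-\operatorname{Im}m(\lambda)/\nu$, whereas your version with $+\operatorname{Im}\ell/\nu$ would make the right-hand side negative.
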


\begin{proof}In fact
$$
W_{x}(\phi,\theta)=W_{0}(\phi,\theta)=\sin^{2}\alpha+\cos^{2}\alpha=1.
$$
and
$$
W_{0}(\phi,\overline{\phi})=W_{0}(\theta,\overline{\theta})=0.
$$
Therefore
$$
W_{0}(\theta+l\phi,\overline{\theta}+\overline{l}\overline{\phi}%
)=l-\overline{l}=2i\operatorname{Im}l.
$$
The general solution of (\ref{1}) is of the form $\theta(x)+l\phi(x).$
Consider those solutions which satisfy a real boundary condition at $x=b$, say%
$$
\{\theta(b)+l\phi(b)\}\cos\beta+\{\Lambda_{q}^{-1}D_{q}\theta(b)+l\Lambda
_{q}^{-1}D_{q}\phi(b)\}\sin\beta=0,
$$
where $\beta$ is real. This gives%
$$
l=l(\lambda)=\frac{\theta(b)\cot\beta+\Lambda_{q}^{-1}D_{q}\theta(b)}%
{\phi(b)\cot\beta+\Lambda_{q}^{-1}D_{q}\phi(b)}.
$$
For each $b$, as $\cot\beta$ varies, $l$ describes a circle in the complex
plane, say $C_{b}$. Replacing $\cot\beta$ by a complex variable $z$, we obtain%
$$
l=l(\lambda)=-\frac{\theta(b)z+\Lambda_{q}^{-1}D_{q}\theta(b)}{\phi
(b)z+\Lambda_{q}^{-1}D_{q}\phi(b)}.
$$
The centre of $C_{b}$ correspond to
$$
z=-\frac{\Lambda_{q}^{-1}D_{q}\overline{\phi}(b)}{\overline{\phi}(b)}%
=-\frac{W_{b}(\theta,\overline{\phi})}{W_{b}(\phi,\overline{\phi})}.
$$
Since $-\Lambda_{q}^{-1}D_{q}\theta(b)/\Lambda_{q}^{-1}D_{q}\phi(b)$ is on
$C_{b}$(for $z=0$) the radius $r_{b}$ of $C_{b}$ is%
\begin{equation}
r_{b}=\left\vert \frac{\Lambda_{q}^{-1}D_{q}\theta(b)}{\Lambda_{q}^{-1}%
D_{q}\phi(b)}-\frac{W_{b}(\theta,\overline{\phi})}{W_{b}(\phi,\overline{\phi
})}\right\vert =\left\vert \frac{W_{b}(\theta,\phi)}{W_{b}(\phi,\overline
{\phi})}\right\vert =\frac{1}{2\nu\int_{0}^{b}\left\vert \phi(x)\right\vert
^{2}d_{q}x}. \label{3}%
\end{equation}
Now $l$ is inside $\ C_b$ if $\operatorname{Im}z<0$, i.e. if%
$$
i\left\{  -\frac{l\Lambda_{q}^{-1}D_{q}\phi(b)+\Lambda_{q}^{-1}D_{q}\theta
(b)}{l\phi(b)+\theta(b)}+\frac{\overline{l}\Lambda_{q}^{-1}D_{q}\overline
{\phi}(b)+\Lambda_{q}^{-1}D_{q}\overline{\theta}(b)}{l\overline{\phi
}(b)+\overline{\theta}(b)}\right\}  >0,
$$
i.e. if
$$
i\left\{  \left\vert l\right\vert ^{2}W_{b}(\phi,\overline{\phi})+lW_{b}%
(\phi,\overline{\theta})+\overline{l}W_{b}(\theta,\overline{\phi}%
)+W_{b}(\theta,\overline{\theta})\right\}  >0,
$$
i.e. if
$$
iW_{b}(\theta+l\phi,\overline{\theta}+\overline{l}\overline{\phi})>0,
$$
i.e. if
$$
2\nu\int_{0}^{b}\left\vert \theta+l\phi\right\vert ^{2}d_{q}x<iW_{0}%
(\theta+l\phi,\overline{\theta}+\overline{l}\overline{\phi}).
$$
Hence $l$ is interior to $\ C_{b}$ if $\nu>0$, and
$$
\int_{0}^{b}\left\vert \theta+l\phi\right\vert ^{2}d_{q}x<-\frac
{\operatorname{Im}l}{\nu}.
$$
The same result is obtained if $\nu<0$. It follows that, if $l$ is interior to
$\ C_{b}$, and $0<b^{\prime}<b$, then%

$$
\int_{0}^{b^{\prime}}\left\vert \theta+l\phi\right\vert ^{2}d_{q}x<\int
_{0}^{b}\left\vert \theta+l\phi\right\vert ^{2}d_{q}x<-\frac{\operatorname{Im}%
l}{\nu}.
$$
Hence $l$ is also inside $C_{b^{\prime}}$. Hence $C_{b^{\prime}}$ includes
$C_{b}$ if $b^{\prime}<b$. It follows that, as $b\rightarrow\infty$, the
circle $C_{b}$ converge either to a limit-circle or to a limit-point.

\bigskip

If $m=m(\lambda)$ is the limit-point, or any point on the limit-circle,%
$$
\int_{0}^{b}\left\vert \theta+m\phi\right\vert ^{2}d_{q}x<-\frac
{\operatorname{Im}m}{\nu},
$$
for all values of $b$. Hence
$$
\int_{0}^{\infty}\left\vert \theta+m\phi\right\vert ^{2}d_{q}x<-\frac
{\operatorname{Im}m}{\nu}.
$$
This finish the proof.
\end{proof}

\begin{remark}
In the limit-circle case, $r_{b}$ tends to a positive limit as
$b\rightarrow\infty$. Hence, by (\ref{3}) the function $\phi$ is
$L^{2}(\mathbb{R}_{q}^{+})$. So in fact, in this case every solution
of (\ref{1}) belongs to $L^{2}(\mathbb{R}_{q}^{+}).$
\end{remark}

\section{The eigenfunctions}

On the circle $C_{b}$ (if $\nu>0$)
$$
\frac{1}{2}\left\vert l\right\vert ^{2}\int_{0}^{b}\left\vert \phi\right\vert
^{2}d_{q}x-\int_{0}^{b}\left\vert \theta\right\vert ^{2}d_{q}x\leq\int_{0}%
^{b}\left\vert \theta+l\phi\right\vert ^{2}d_{q}x<-\frac{\operatorname{Im}%
l}{\nu}\leq\frac{\left\vert l\right\vert }{v}.
$$
Solving for $\left\vert l\right\vert $ we obtain
$$
\left\vert l\right\vert \leq\frac{1}{\nu\int_{0}^{b}\left\vert \phi\right\vert
^{2}d_{q}x}+\left\{  \frac{2\int_{0}^{b}\left\vert \theta\right\vert ^{2}%
d_{q}x}{\int_{0}^{b}\left\vert \phi\right\vert ^{2}d_{q}x}+\frac{1}{\left(
\nu\int_{0}^{b}\left\vert \phi\right\vert ^{2}d_{q}x\right)  ^{2}}\right\}
^{\frac{1}{2}}.
$$
Since the above right-hand side is $O(\frac{1}{\nu})$. Hence as $\nu
\rightarrow0$, for any fixed $b$, it also follows that $m(\lambda)=O(\frac
{1}{\nu})$. Hence, if $m(\lambda)$ has poles on the real axis, they are all
simple. In this paper we assume that $m(\lambda)$ form a single analytic
function, whose only singularities are poles on the real axis. Let them be
$\lambda_{0},\lambda_{1},\ldots,$ and let the residues be $r_{0},r_{1},\ldots$.

\bigskip

\begin{lemma}
For any fixed complex $\lambda$ and $\lambda^{\prime}$
$$
\lim_{x\rightarrow\infty}W_{x}\{\psi(.,\lambda),\psi(.,\lambda^{\prime})\}=0.
$$
\end{lemma}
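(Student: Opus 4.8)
The plan is to derive the statement from two ingredients: the $L^2$-membership supplied by Theorem 1 and the explicit form of the $q$-Wronskian on the lattice $\mathbb{R}_q^+$. For non-real $\lambda$ and $\lambda'$ Theorem 1 gives $F:=\psi(\cdot,\lambda)=\theta(\cdot,\lambda)+m(\lambda)\phi(\cdot,\lambda)\in L^{2}(\mathbb{R}_q^+)$ and, in the same way, $G:=\psi(\cdot,\lambda')\in L^{2}(\mathbb{R}_q^+)$ (in the limit-circle case the Remark makes this even more immediate, since there every solution is square-integrable). Thus the whole content is to show that a certain bilinear expression in two square-integrable lattice functions decays at infinity.

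The first concrete step is to unfold the operator $\Lambda_q^{-1}D_q$ inside the definition of $W_x$. A short computation collapses the four terms into the compact form
$$
W_{x}(F,G)=\frac{1-q}{x}\Big[F(x)G(q^{-1}x)-G(x)F(q^{-1}x)\Big],
$$
so that on the lattice point $x=q^{n}$ one has
$$
W_{q^{n}}(F,G)=\frac{1-q}{q^{n}}\Big[F(q^{n})G(q^{n-1})-G(q^{n})F(q^{n-1})\Big].
$$
Since $x\rightarrow\infty$ means $n\rightarrow-\infty$, the lemma is equivalent to the vanishing of this quantity as $n\rightarrow-\infty$.

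The second step turns $L^{2}$-membership into pointwise decay. As $\int_{0}^{\infty}|F|^{2}d_qx=(1-q)\sum_{n}q^{n}|F(q^{n})|^{2}$ converges, its general term tends to $0$ at both ends; in particular $q^{n}|F(q^{n})|^{2}\rightarrow0$ as $n\rightarrow-\infty$, and likewise for $G$. Writing $\varepsilon_{n}:=q^{n/2}|F(q^{n})|\rightarrow0$ and $\delta_{n}:=q^{n/2}|G(q^{n})|\rightarrow0$, the triangle inequality gives
$$
\big|W_{q^{n}}(F,G)\big|\leq(1-q)\,q^{-2n+\frac12}\big(\varepsilon_{n}\delta_{n-1}+\delta_{n}\varepsilon_{n-1}\big).
$$
Here the geometric factor satisfies $q^{-2n+\frac12}\rightarrow0$ as $n\rightarrow-\infty$, because $0<q<1$ forces $q^{-2n}\rightarrow0$, while the bracket stays bounded (indeed tends to $0$); hence the right-hand side tends to $0$, which is the claim.

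I expect the only delicate point to be the bookkeeping of $q$-powers in the last display, together with the correct reading that ``$x\rightarrow\infty$'' is ``$n\rightarrow-\infty$'' and that convergence of the two-sided Jackson sum forces its terms to vanish at the lower end. Once those are in place the proof is a direct estimate; as a cross-check one may also note that the Lagrange identity at the start of this section yields $W_{b}(F,G)=W_{0}(F,G)-(\lambda'-\lambda)\int_{0}^{b}FG\,d_qx$, whose right-hand side converges as $b\rightarrow\infty$ by Cauchy--Schwarz, so that the limit of the Wronskian exists independently, and the computation above identifies it as $0$.
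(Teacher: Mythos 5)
Your proof is correct, and it takes a genuinely different route from the paper's. The paper follows Titchmarsh's classical argument: it starts from the identity $W_b\{\theta(\cdot,\lambda)+l(\lambda)\phi(\cdot,\lambda),\,\theta(\cdot,\lambda')+l(\lambda')\phi(\cdot,\lambda')\}=0$ (both solutions satisfy the same real boundary condition at $x=b$), expands this in terms of $\psi$ and $\phi$, and kills each error term using the circle geometry from Theorem 1, namely $|l(\lambda)-m(\lambda)|\le 2r_b=\{\nu\int_0^b|\phi|^2d_qx\}^{-1}$, together with the Lagrange-identity growth bound $W_b\{\phi,\psi\}=O(\{\int_0^b|\phi|^2d_qx\}^{1/2})+O(1)$; in the limit-circle case it even needs the extra proviso that $l(\lambda)\to m(\lambda)$. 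You instead exploit a feature special to the $q$-lattice: the Jackson integral is a bilateral series, so $\psi(\cdot,\lambda)\in L^2(\mathbb{R}_q^+)$ (Theorem 1) forces its terms to vanish, giving the pointwise decay $q^{n/2}|\psi(q^n,\lambda)|\to 0$ as $n\to-\infty$, and since $W_x$ is a purely local bilinear expression, a direct estimate finishes. Your collapsed formula $W_x(F,G)=\frac{1-q}{x}\left[F(x)G(q^{-1}x)-G(x)F(q^{-1}x)\right]$ is correct under the paper's convention $\Lambda_q^{-1}D_qG(x)=(D_qG)(q^{-1}x)$ (it is consistent with $W_0(\phi,\theta)=1$), and your power bookkeeping is right: the bound $(1-q)q^{-2n+1/2}(\varepsilon_n\delta_{n-1}+\delta_n\varepsilon_{n-1})$ tends to $0$ as $n\to-\infty$ since the exponent $-2n+\tfrac12\to+\infty$ and $0<q<1$. (Like the paper, you implicitly take $\lambda,\lambda'$ non-real, which is the only case where $\psi$ and $m$ are defined.) As for what each approach buys: yours is shorter, more elementary, treats the limit-point and limit-circle cases uniformly with no proviso on $l\to m$, and rests on a fact with no continuous analogue — an $L^2(0,\infty)$ function need not decay pointwise, which is precisely why Titchmarsh's original proof, and hence the paper's, must proceed indirectly. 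The paper's argument, by contrast, never uses the discreteness of $\mathbb{R}_q^+$, so it is the faithful $q$-transcription of the classical Weyl--Titchmarsh machinery and would survive in settings where your pointwise-decay step fails.
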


\begin{proof}
Since
$$
W_{b}\{\theta(x,\lambda)+l(\lambda)\phi(x,\lambda),\theta(x,\lambda^{\prime
})+l(\lambda^{\prime})\phi(x,\lambda^{\prime})\}=0,
$$
i.e.
$$
W_{b}[\psi(x,\lambda)+\{l(\lambda)-m(\lambda)\}\phi(x,\lambda),\psi
(x,\lambda^{\prime})+\{l(\lambda^{\prime})-m(\lambda^{\prime})\}\phi
(x,\lambda^{\prime})]=0,
$$
i.e.
\begin{align*}
&  W_{b}\{\psi(x,\lambda),\psi(x,\lambda^{\prime})\}+\{l(\lambda
)-m(\lambda)\}W_{b}\{\phi(x,\lambda),\psi(x,\lambda^{\prime})\}\\
&  +\{l(\lambda^{\prime})-m(\lambda^{\prime})\}W_{b}\{\psi(x,\lambda
),\phi(x,\lambda^{\prime})\}\\
&  +\{l(\lambda)-m(\lambda)\}\{l(\lambda^{\prime})-m(\lambda^{\prime}%
)\}W_{b}\{\phi(x,\lambda),\phi(x,\lambda^{\prime})\}\\
&  =0.
\end{align*}
Now
\begin{align*}
W_{b}\{\phi(x,\lambda),\psi(x,\lambda^{\prime})\}  &  =(\lambda-\lambda
^{\prime})\int_{0}^{b}\phi(x,\lambda)\psi(x,\lambda^{\prime})d_{q}%
x+W_{0}\{\phi(x,\lambda),\psi(x,\lambda^{\prime})\}\\
&  =O\left\{  \int_{0}^{b}\left\vert \phi(x,\lambda)\right\vert ^{2}%
d_{q}x\right\}  ^{\frac{1}{2}}+O(1),
\end{align*}
as $b\rightarrow\infty$, $\lambda$ and $\lambda^{\prime}$ being
fixed. In the limit-point case
$$
\left\vert l(\lambda)-m(\lambda)\right\vert \leq2r_{b}=\left\{  \nu\int
_{0}^{b}\left\vert \phi(x,\lambda)\right\vert ^{2}d_{q}x\right\}  ^{-1},
$$
so that
$$
\lim_{b\rightarrow\infty}\{l(\lambda)-m(\lambda)\}W_{b}\{\phi(x,\lambda
),\psi(x,\lambda^{\prime})\}=0.
$$
This also holds in the limit-circle case, if $l(\lambda)\rightarrow
m(\lambda)$, since then $\int_{0}^{b}\left\vert \phi(x,\lambda)\right\vert
^{2}d_{q}x$ is bounded. Similar arguments apply to the other terms.
\end{proof}

\begin{lemma}
Let $\{f_n\}$ be a sequence of functions which converges in mean
square to $f$ over any finite interval, while
$$
\int_{0}^{\infty}\left\vert f_{n}(x)\right\vert ^{2}d_{q}x\leq K
$$
for all $n$. Then $f$ is $L^{2}(\mathbb{R}_{q}^{+})$, and if $g$
belongs to $L^{2}(\mathbb{R}_{q}^{+})$,
$$
\lim_{n\rightarrow\infty}\int_{0}^{\infty}f_{n}(x)g(x)d_{q}x=\int_{0}^{\infty
}f(x)g(x)d_{q}x.
$$
\end{lemma}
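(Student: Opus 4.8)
The plan is to run the classical mean-square-convergence argument of Titchmarsh, the only $q$-specific point being that Cauchy--Schwarz and the triangle inequality are available for the Jackson integral. Both hold because
$$
\int_{0}^{\infty}|h(x)|^{2}d_{q}x=(1-q)\sum_{n=-\infty}^{\infty}q^{n}|h(q^{n})|^{2}
$$
realizes $L^{2}(\mathbb{R}_{q}^{+})$ as a weighted $\ell^{2}$ space, hence a genuine Hilbert space, so all the standard inequalities apply verbatim.

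First I would show $f\in L^{2}(\mathbb{R}_{q}^{+})$. Fix a finite $b$. Mean-square convergence on $[0,b]$ together with the triangle inequality in $L^{2}[0,b]$ gives
$$
\left(\int_{0}^{b}|f|^{2}d_{q}x\right)^{1/2}=\lim_{n\rightarrow\infty}\left(\int_{0}^{b}|f_{n}|^{2}d_{q}x\right)^{1/2}\leq\sqrt{K},
$$
where the inequality uses $\int_{0}^{b}|f_{n}|^{2}d_{q}x\leq\int_{0}^{\infty}|f_{n}|^{2}d_{q}x\leq K$. Since the resulting bound $\int_{0}^{b}|f|^{2}d_{q}x\leq K$ does not depend on $b$, letting $b\rightarrow\infty$ yields $\int_{0}^{\infty}|f|^{2}d_{q}x\leq K<\infty$.

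Next I would prove the convergence of the integrals. Writing the difference as $\int_{0}^{\infty}(f_{n}-f)g\,d_{q}x$ and splitting it at a finite $b$, I would estimate the two pieces separately. On $[0,b]$ Cauchy--Schwarz gives
$$
\left|\int_{0}^{b}(f_{n}-f)g\,d_{q}x\right|\leq\left(\int_{0}^{b}|f_{n}-f|^{2}d_{q}x\right)^{1/2}\left(\int_{0}^{b}|g|^{2}d_{q}x\right)^{1/2},
$$
which tends to $0$ as $n\rightarrow\infty$ by mean-square convergence on $[0,b]$. For the tail, the triangle inequality together with the two uniform bounds $\int_{0}^{\infty}|f_{n}|^{2}d_{q}x\leq K$ and $\int_{0}^{\infty}|f|^{2}d_{q}x\leq K$ gives $\left(\int_{b}^{\infty}|f_{n}-f|^{2}d_{q}x\right)^{1/2}\leq 2\sqrt{K}$, so by Cauchy--Schwarz
$$
\left|\int_{b}^{\infty}(f_{n}-f)g\,d_{q}x\right|\leq 2\sqrt{K}\left(\int_{b}^{\infty}|g|^{2}d_{q}x\right)^{1/2}.
$$

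The crux --- and the only place where the hypotheses must be combined carefully --- is to make the tail small uniformly in $n$. Given $\varepsilon>0$, since $g\in L^{2}(\mathbb{R}_{q}^{+})$ I would first choose $b$ so large that $\int_{b}^{\infty}|g|^{2}d_{q}x<\varepsilon$; this bounds the tail contribution by $2\sqrt{K}\sqrt{\varepsilon}$ for \emph{every} $n$ at once. With $b$ now fixed, the finite part tends to $0$, so
$$
\limsup_{n\rightarrow\infty}\left|\int_{0}^{\infty}(f_{n}-f)g\,d_{q}x\right|\leq 2\sqrt{K}\sqrt{\varepsilon}.
$$
Since $\varepsilon$ is arbitrary the limit is $0$, which is the assertion. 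The essential role of the hypothesis $\int_{0}^{\infty}|f_{n}|^{2}d_{q}x\leq K$ is precisely this uniform control of the tail; mean-square convergence on finite intervals alone would not suffice.
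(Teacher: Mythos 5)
Your proof is correct and follows essentially the same route as the paper's: establish $f\in L^{2}(\mathbb{R}_{q}^{+})$ by passing the uniform bound $K$ to the limit on finite intervals, then split $\int_{0}^{\infty}(f_{n}-f)g\,d_{q}x$ at a finite point and apply Cauchy--Schwarz to each piece, using mean-square convergence on the finite part and the uniform $L^{2}$ bound against the small tail of $g$ on the infinite part. You merely spell out the choice of the splitting point and the $\limsup$ bookkeeping that the paper leaves implicit.
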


\begin{proof}
We have
$$
\int_{0}^{X}\left\vert f(x)\right\vert ^{2}d_{q}x=\lim_{n\rightarrow\infty
}\int_{0}^{X}\left\vert f_{n}(x)\right\vert ^{2}d_{q}x\leq K
$$
for every $X$, so that $f$ is $L^{2}(\mathbb{R}_{q}^{+})$. Now%
\begin{align*}
\left\vert \int_{0}^{\infty}(f-f_{n})gd_{q}x\right\vert  &  \leq\left\vert
\int_{0}^{X}\right\vert +\left\vert \int_{X}^{\infty}\right\vert \\
&  \leq\left\{  \int_{0}^{X}\left\vert f-f_{n}\right\vert ^{2}d_{q}x\int
_{0}^{\infty}\left\vert g\right\vert ^{2}d_{q}x\right\}  ^{\frac{1}{2}%
}+\left\{  \int_{0}^{\infty}\left\vert f-f_{n}\right\vert ^{2}d_{q}x\int
_{X}^{\infty}\left\vert g\right\vert ^{2}d_{q}x\right\}  ^{\frac{1}{2}}.
\end{align*}
This finish the proof.
\end{proof}

\begin{proposition}
The functions
$$
\psi_{n}(x)=\sqrt{r_{n}}\phi(x,\lambda_{n})
$$
form a normal orthogonal set of $L^{2}(\mathbb{R}_{q}^{+}).$
\end{proposition}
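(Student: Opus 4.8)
The plan is to collapse the whole statement into one master identity and then read off orthogonality and normalization by extracting residues. First I would put $F=\psi(\cdot,\lambda)$ and $G=\psi(\cdot,\lambda')$ into the Wronskian relation from the opening of Section 3, giving
$$(\lambda'-\lambda)\int_0^b\psi(x,\lambda)\psi(x,\lambda')\,d_qx=W_0\{\psi(\cdot,\lambda),\psi(\cdot,\lambda')\}-W_b\{\psi(\cdot,\lambda),\psi(\cdot,\lambda')\}.$$
Letting $b\to\infty$ removes the last term by Lemma 1, and expanding $W_0$ by bilinearity of the $q$-Wronskian together with the boundary values $W_0(\phi,\theta)=1$, $W_0(\phi,\phi)=W_0(\theta,\theta)=0$ from the proof of Theorem 1 (these persist for two different spectral parameters, since $W_0$ depends only on the $\lambda$-independent initial data of $\phi$ and $\theta$ at $0$) yields $W_0\{\psi(\cdot,\lambda),\psi(\cdot,\lambda')\}=m(\lambda)-m(\lambda')$ and hence the master identity
$$\int_0^\infty\psi(x,\lambda)\psi(x,\lambda')\,d_qx=\frac{m(\lambda')-m(\lambda)}{\lambda-\lambda'}.$$

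Next I would pass to the poles. Since $m$ has a simple pole at $\lambda_n$ with residue $r_n$ while $\theta,\phi$ are regular in $\lambda$, one has $(\lambda-\lambda_n)\psi(x,\lambda)\to r_n\phi(x,\lambda_n)$; moreover the estimate $\int_0^\infty|\psi(x,\lambda)|^2\,d_qx\le-\operatorname{Im}m(\lambda)/\nu$ from the proof of Theorem 1, matched against the simple-pole form $m(\lambda)\sim r_n/(\lambda-\lambda_n)$, gives $|\lambda-\lambda_n|^2\int_0^\infty|\psi|^2\,d_qx\le K$ uniformly as $\lambda\to\lambda_n$. Thus $(\lambda-\lambda_n)\psi(\cdot,\lambda)$ satisfies both hypotheses of Lemma 2.

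Then I would take the limit in two stages. Multiplying the master identity by $(\lambda-\lambda_m)$ and letting $\lambda\to\lambda_m$ through non-real values with $\lambda'$ fixed and non-real: the first conclusion of Lemma 2 shows the mean-square limit $\phi(\cdot,\lambda_m)$ lies in $L^2(\mathbb{R}_q^+)$, while the second turns the left side into $r_m\int_0^\infty\phi(x,\lambda_m)\psi(x,\lambda')\,d_qx$; the right side tends to $r_m/(\lambda'-\lambda_m)$, and cancelling the nonzero residue $r_m$ (nonzero because $\lambda_m$ is a genuine simple pole) gives
$$\int_0^\infty\phi(x,\lambda_m)\psi(x,\lambda')\,d_qx=\frac{1}{\lambda'-\lambda_m}.$$
Multiplying by $(\lambda'-\lambda_n)$ and letting $\lambda'\to\lambda_n$, now using $\phi(\cdot,\lambda_m)\in L^2$ as the fixed factor, Lemma 2 produces $r_n\int_0^\infty\phi(x,\lambda_m)\phi(x,\lambda_n)\,d_qx$ on the left, while the right side tends to $0$ for $m\ne n$ and to $1$ for $m=n$. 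Since $\psi_n=\sqrt{r_n}\phi(\cdot,\lambda_n)$, this reads $\int_0^\infty\psi_m\psi_n\,d_qx=0$ for $m\ne n$ and $\int_0^\infty\psi_n^2\,d_qx=1$ (which in particular forces $r_n>0$, so that $\sqrt{r_n}$ is legitimate), establishing that the $\psi_n$ form a normal orthogonal set.

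I expect the main obstacle to be the rigorous passage to the limit inside the Jackson integral, that is, the verification that $(\lambda-\lambda_n)\psi(\cdot,\lambda)$ meets the uniform bound hypothesis of Lemma 2. This is precisely where the $L^2$ estimate inherited from Theorem 1 must be balanced against the blow-up rate of the simple pole, so that the prefactor $|\lambda-\lambda_n|^2$ exactly cancels the growth of $-\operatorname{Im}m(\lambda)/\nu$; everything else is bilinear bookkeeping and elementary residue computation.
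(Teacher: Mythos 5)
Your proposal is correct and follows essentially the same route as the paper's own proof: the master identity is the paper's relation $\int_{0}^{\infty}\psi(x,\lambda)\psi(x,\lambda')\,d_{q}x=\{m(\lambda)-m(\lambda')\}/(\lambda'-\lambda)$ obtained from the Wronskian relation together with Lemma 1, and your two-stage residue extraction justified by Lemma 2 (with the uniform $L^{2}$ bound coming from the simple-pole estimate on $m$) is exactly the paper's passage to $\int_{0}^{\infty}\psi(x,\lambda)\phi(x,\lambda_{n})\,d_{q}x=1/(\lambda-\lambda_{n})$ and then to orthogonality and normalization. The only cosmetic differences are that you let $\lambda$ approach $\lambda_{n}$ through arbitrary non-real values instead of vertically along $\lambda_{n}+i\nu$, and that you make explicit the positivity of $r_{n}$, which the paper leaves implicit.
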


\begin{proof}
By (2) if $\lambda$ and $\lambda^{\prime}$ are not real, we have%
\begin{align*}
(\lambda^{\prime}-\lambda)\int_{0}^{b}\psi(x,\lambda)\psi(x,\lambda^{\prime
})d_{q}x  &  =W_{0}\{\psi(x,\lambda),\psi(x,\lambda^{\prime})\}-W_{b}%
\{\psi(x,\lambda),\psi(x,\lambda^{\prime})\}\\
&
=m(\lambda)-m(\lambda^{\prime})-W_{b}\{\psi(x,\lambda),\psi(x,\lambda
^{\prime})\}.
\end{align*}
By Lemma 1, the second term in the right tends to zero as
$b\rightarrow\infty$. Hence
\begin{equation}
\int_{0}^{\infty}\psi(x,\lambda)\psi(x,\lambda^{\prime})d_{q}x=\frac
{m(\lambda)-m(\lambda^{\prime})}{\lambda^{\prime}-\lambda}.
\label{10}
\end{equation}
In particular, taking $\lambda^{\prime}=\overline{\lambda}$, we obtain%
$$
\int_{0}^{\infty}\left\vert \psi(x,\lambda)\right\vert ^{2}d_{q}%
x=-\frac{\operatorname{Im}\left\{  m(\lambda)\right\}  }{\nu}.
$$
Now \ let $\lambda_{n}$ be an eigenvalue, and let $\lambda^{\prime}%
=\lambda_{n}+i\nu,$ $\nu\rightarrow0$. Then for any fixed $X$,%
\begin{align*}
&  \int_{0}^{X}\left\vert \nu\psi(x,\lambda^{\prime})+ir_{n}\phi(x,\lambda
_{n})\right\vert ^{2}d_{q}x\\
&  =\int_{0}^{X}\left\vert \nu\theta(x,\lambda^{\prime})+\{\nu m(\lambda
^{\prime})+ir_{n}\}\phi(x,\lambda^{\prime})-ir_{n}\{\phi(x,\lambda^{\prime
})-\phi(x,\lambda_{n})\right\vert ^{2}d_{q}x\rightarrow0.
\end{align*}
Also, by
$$
\int_{0}^{\infty}\left\vert \nu\psi(x,\lambda^{\prime})\right\vert ^{2}%
d_{q}x\leq\left\vert \nu m(\lambda^{\prime})\right\vert =O(1)
$$
as $\nu\rightarrow0$, since the pole of $m(\lambda^{\prime})$ at $\lambda_{n}$
is simple. On multiplying (\ref{10}) by $i\nu/r_{m}$, making $\nu\rightarrow
0$, and using Lemma 2 we see that $\phi(x,\lambda_{n})$ is $L^{2}%
(\mathbb{R}_{q}^{+})$, and%
\begin{equation}
\int_{0}^{\infty}\psi(x,\lambda)\phi(x,\lambda_{n})d_{q}x=\frac{1}%
{\lambda-\lambda_{n}}. \label{4}%
\end{equation}
If $\lambda$ tends to a different eigenvalue $\lambda_{m}$, on multiplying
(\ref{4}) by $i\nu/r_{m}$ and making $\nu\rightarrow0$, we obtain%
$$
\int_{0}^{\infty}\phi(x,\lambda_{m})\phi(x,\lambda_{n})d_{q}x=0.
$$
If $\lambda$ tends to the same eigenvalue $\lambda_{n}$, it follows
similarly that
$$
\int_{0}^{\infty}\left\{  \phi(x,\lambda_{n})\right\}
^{2}d_{q}x=\frac {1}{r_{n}},
$$
which leads to the result.
\end{proof}

\section{Series expansions}

Let $f$ be $L^{2}(\mathbb{R}_{q}^{+})$ and let
$$
\Phi(x,\lambda)=\psi(x,\lambda)\int_{0}^{x}\phi(y,\lambda)f(y)d_{q}%
y+\phi(x,\lambda)\int_{x}^{\infty}\psi(y,\lambda)f(y)d_{q}y,
$$
where $\phi$ and $\psi$ are the functions defined above. In the
following we denote by $c_n$ the nth Fourier coefficient of the
function $f$
$$
c_n=\int_{0}^{\infty}\psi_n(y)f(y)d_qy.
$$

\begin{proposition}
The function $$\lambda\mapsto\Phi(x,\lambda)$$ has a simple pole at
$\lambda_{n}$, its residue is $ c_{n}\psi_{n}(x)$.
\end{proposition}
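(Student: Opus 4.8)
The plan is to isolate the single source of non-analyticity in $\Phi(x,\lambda)$, which is the factor $\psi(x,\lambda)$ together with the tail integral that contains it, and to read off the residue from the simple pole of $m$. First I would note that for each fixed $x$ the solutions $\phi(\cdot,\lambda)$ and $\theta(\cdot,\lambda)$ are entire in $\lambda$: they are produced from their (constant) boundary data by the second-order recursion that (\ref{1}) imposes on the triple $f(q^{-1}x),f(x),f(qx)$, a recursion whose coefficients are affine in $\lambda$, so each value is a polynomial in $\lambda$. By the standing hypothesis $m(\lambda)=\frac{r_n}{\lambda-\lambda_n}+h(\lambda)$ near $\lambda_n$ with $h$ holomorphic, so that
$$\psi(x,\lambda)=\frac{r_n\phi(x,\lambda_n)}{\lambda-\lambda_n}+\Bigl[\theta(x,\lambda)+h(\lambda)\phi(x,\lambda)+r_n\frac{\phi(x,\lambda)-\phi(x,\lambda_n)}{\lambda-\lambda_n}\Bigr],$$
the bracket being holomorphic at $\lambda_n$ (the difference quotient tends to $\partial_\lambda\phi(x,\lambda_n)$). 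Hence $\psi(\cdot,\lambda)$ has a simple pole there with residue $r_n\phi(\cdot,\lambda_n)=\sqrt{r_n}\,\psi_n$.

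Next I would split $\Phi(x,\lambda)=\psi(x,\lambda)I_1(x,\lambda)+\phi(x,\lambda)I_2(x,\lambda)$, where $I_1(x,\lambda)=\int_0^x\phi(y,\lambda)f(y)d_qy$ and $I_2(x,\lambda)=\int_x^\infty\psi(y,\lambda)f(y)d_qy$. The integral $I_1$ runs over the finite $q$-interval $[0,x]$, so it is a convergent $q$-integral of an entire integrand and is holomorphic at $\lambda_n$; combined with the holomorphy of $\phi(x,\lambda)$, the first product inherits only the simple pole of $\psi(x,\lambda)$, contributing residue $r_n\phi(x,\lambda_n)\int_0^x\phi(y,\lambda_n)f(y)d_qy$. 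In the second product $\phi(x,\lambda)$ is holomorphic, so everything reduces to the residue of $I_2$, for which I claim
$$\lim_{\lambda\to\lambda_n}(\lambda-\lambda_n)\int_x^\infty\psi(y,\lambda)f(y)d_qy=r_n\int_x^\infty\phi(y,\lambda_n)f(y)d_qy,$$
the right-hand integral converging by Cauchy--Schwarz since $\phi(\cdot,\lambda_n)$ (Proposition 1) and $f$ both lie in $L^2(\mathbb{R}_q^+)$.

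The main obstacle is exactly this interchange of the limit in $\lambda$ with the infinite $q$-integral over $[x,\infty)$: the holomorphic remainder above involves $\theta(\cdot,\lambda)$, which need not be $L^2(\mathbb{R}_q^+)$, so a naive termwise passage is not licensed. I would circumvent this by approaching along $\lambda=\lambda_n+i\nu$ and applying Lemma 2 with $f_k(y)=(\lambda-\lambda_n)\psi(y,\lambda)$ for a sequence $\nu_k\downarrow 0$ and weight $g=f\cdot\mathbf{1}_{[x,\infty)}\in L^2(\mathbb{R}_q^+)$. Its hypotheses hold: the mean-square convergence $(\lambda-\lambda_n)\psi(\cdot,\lambda)\to r_n\phi(\cdot,\lambda_n)$ over finite intervals is precisely the estimate carried out in the proof of Proposition 1, while the uniform bound $\int_0^\infty|(\lambda-\lambda_n)\psi(y,\lambda)|^2d_qy=|\lambda-\lambda_n|^2\bigl(-\mathrm{Im}\,m(\lambda)/\nu\bigr)=O(1)$ follows from simplicity of the pole of $m$ together with the identity $\int_0^\infty|\psi|^2d_qx=-\mathrm{Im}\{m(\lambda)\}/\nu$. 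Lemma 2 then delivers the claimed limit, so $I_2$ has at worst a simple pole at $\lambda_n$ and the second product contributes residue $r_n\phi(x,\lambda_n)\int_x^\infty\phi(y,\lambda_n)f(y)d_qy$.

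Finally I would add the two residues and recombine the integrals into $\int_0^\infty=\int_0^x+\int_x^\infty$, obtaining
$$\mathrm{Res}_{\lambda=\lambda_n}\Phi(x,\lambda)=r_n\phi(x,\lambda_n)\int_0^\infty\phi(y,\lambda_n)f(y)d_qy.$$
Since $\psi_n=\sqrt{r_n}\,\phi(\cdot,\lambda_n)$, the definition of the Fourier coefficient gives $\int_0^\infty\phi(y,\lambda_n)f(y)d_qy=c_n/\sqrt{r_n}$, whence the residue equals $r_n\phi(x,\lambda_n)\,c_n/\sqrt{r_n}=c_n\sqrt{r_n}\,\phi(x,\lambda_n)=c_n\psi_n(x)$. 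The pole is simple because each of the two products was shown to have at worst a simple pole at $\lambda_n$, which completes the plan.
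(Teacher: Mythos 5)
Your proof is correct at the level of rigor the paper itself operates at, but it follows a genuinely different route. The paper never expands $\psi$ in a Laurent series; instead it truncates $f$: having checked that $\Phi$ solves $\Delta_q\Phi=[q(x)-\lambda]\Phi+f$ together with the boundary condition (\ref{*}), it introduces $\Phi_X$, the same construction with $f$ replaced by $f\cdot\mathbf{1}_{[0,X]}$, for which the substitution $\psi=\theta+m\phi$ yields a closed expression involving only $\theta$, $\phi$, $m$ and $q$-integrals over finite ranges; this $\Phi_X$ is manifestly meromorphic with simple poles at the $\lambda_n$ and residues $r_n\phi(x,\lambda_n)\int_0^X\phi(y,\lambda_n)f(y)d_qy$, and the proposition follows by letting $X\rightarrow\infty$ --- the interchange of this limit with residue extraction being asserted, not proved. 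You keep $f$ whole, split $\Phi=\psi I_1+\phi I_2$, read the pole of $\psi$ off the expansion $m(\lambda)=r_n/(\lambda-\lambda_n)+h(\lambda)$, and handle the tail integral $I_2$ by Lemma 2, fed with exactly the two estimates from the proof of Proposition 1 (mean-square convergence of $(\lambda-\lambda_n)\psi$ to $r_n\phi(\cdot,\lambda_n)$ on finite intervals, and the bound $\nu^{2}\int_0^\infty|\psi|^{2}d_qx=-\nu\,\mathrm{Im}\,m(\lambda)=O(1)$). That is a real gain: the one step the paper waves through is replaced by an appeal to a lemma the paper has already proved, and your final bookkeeping $r_n\phi(x,\lambda_n)\,c_n/\sqrt{r_n}=c_n\psi_n(x)$ agrees with the paper's.

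Two caveats. Minor: your parenthetical claim that each value $\phi(x,\lambda)$ is a polynomial in $\lambda$ overstates the situation --- the initial data sit at $x=0$, which is the accumulation point of the lattice $\mathbb{R}_q^+$, so the values are obtained by a limiting procedure, not by finitely many steps of the recursion; what is true (and what the paper also uses tacitly) is that $\phi$ and $\theta$ are entire in $\lambda$. More substantive: Lemma 2 gives you the limit of $(\lambda-\lambda_n)I_2$ only along the vertical approach $\lambda=\lambda_n+i\nu$. A finite limit along one direction identifies the residue only if you already know that $I_2$ is meromorphic at $\lambda_n$; an essential or non-isolated singularity is not excluded by a directional limit, and this meromorphy is not supplied by your decomposition, precisely because --- as you yourself note --- the holomorphic remainder of $\psi$ involves $\theta$, which need not lie in $L^{2}(\mathbb{R}_q^+)$, so $I_2$ cannot be expanded term by term. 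This is, however, the mirror image of the gap in the paper's own proof (which needs locally uniform convergence of $\Phi_X\rightarrow\Phi$ near $\lambda_n$ to transfer meromorphy and residues to the limit), and under the paper's standing assumption that these functions of $\lambda$ are analytic apart from poles on the real axis, your directional limit does force the pole of $\Phi(x,\cdot)$ at $\lambda_n$ to be simple with residue $c_n\psi_n(x)$.
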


\begin{proof}We have
$$
D_{q}\Phi(x,\lambda)=D_{q}\psi(x,\lambda)\int_{0}^{qx}\phi(y,\lambda
)f(y)d_{q}y+D_{q}\phi(x,\lambda)\int_{qx}^{\infty}\psi(y,\lambda)f(y)d_{q}y
$$
and
\begin{align*}
D_{q}^{2}\Phi(x,\lambda)  &  =D_{q}^{2}\psi(x,\lambda)\int_{0}^{q^{2}x}%
\phi(y,\lambda)f(y)d_{q}y+D_{q}^{2}\phi(x,\lambda)\int_{q^{2}x}^{\infty}%
\psi(y,\lambda)f(y)d_{q}y\\
&  +qD_{q}\psi(x,\lambda)\phi(qx,\lambda)f(qx)-qD_{q}\phi(x,\lambda
)\psi(qx,\lambda)f(qx).
\end{align*}
Therefore
\begin{align*}
\Delta_{q}\Phi(x,\lambda)  &  ={\small \left[  \frac{1-q}{q}\right]
^{2}\Lambda_{q}^{-1}D_{q}^{2}}\Phi(x,\lambda)\\
&  =\Delta_{q}\psi(x,\lambda)\int_{0}^{qx}\phi(y,\lambda)f(y)d_{q}y+\Delta
_{q}\phi(x,\lambda)\int_{qx}^{\infty}\psi(y,\lambda)f(y)d_{q}y\\
&  +\frac{(1-q)^{2}}{q}\left[  {\small \Lambda_{q}^{-1}}D_{q}\psi
(x,\lambda)\phi(x,\lambda)-{\small \Lambda_{q}^{-1}}D_{q}\phi(x,\lambda
)\psi(x,\lambda)\right]  f(x)\\
&  =[q(x)-\lambda]\Phi(x,\lambda)+W_{x}(\theta,\phi)f(x)=[q(x)-\lambda
]\Phi(x,\lambda)+f(x).
\end{align*}
The function $\Phi$ satisfies the boundary condition%
\begin{equation}
\Phi(0,\lambda)\cos\alpha+\Phi^{\prime}(0,\lambda)\sin\alpha=0.\label{*}
\end{equation}
If $\Phi_{X}(x,\lambda)$ is the corresponding function with $f(y)=0$
for $y>X$, then
\begin{align*}
\Phi_{X}(x,\lambda)  &  =\theta(x,\lambda)\int_{0}^{x}\phi(y,\lambda
)f(y)d_{q}y+\phi(x,\lambda)\int_{x}^{X}\phi(y,\lambda)f(y)d_{q}y\\
&  +m(\lambda)\phi(x,\lambda)\int_{0}^{X}\phi(y,\lambda)f(y)d_{q}y.
\end{align*}
This is clearly regular everywhere except at
$\lambda=\lambda_{0},\lambda _{1},\ldots,$ where it hase simple
poles with residues
$$
r_{n}\phi(x,\lambda_{n})\int_{0}^{X}\phi(y,\lambda_{n})f(y)d_{q}y.
$$
Hence making $X\rightarrow\infty$ we find that $\Phi(x,\lambda)$ has
a simple pole at $\lambda_{n}$, its residue there being limit of the
residue of $\Phi_{X}(x,\lambda)$, i.e.
$$
r_{n}\phi(x,\lambda_{n})\int_{0}^{\infty}\phi(y,\lambda_{n})f(y)d_{q}%
y=\psi_{n}(x)\int_{0}^{\infty}\psi_{n}(y)f(y)d_{q}y=c_{n}\psi_{n}(x).
$$
This finish the proof.
\end{proof}

\begin{lemma}
If $f$ is any function of $L^{2}(\mathbb{R}_{q}^{+})$ then
\[
\int_{0}^{\infty}\left\vert \Phi(x,\lambda)\right\vert ^{2}d_{q}x\leq\frac
{1}{\nu^{2}}\int_{0}^{\infty}\left\vert f(x)\right\vert ^{2}d_{q}x.
\]
\end{lemma}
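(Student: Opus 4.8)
The plan is to obtain the inequality from the $q$-analogue of Green's formula, applied to $\Phi$ and its conjugate in the same way that (\ref{2}) was derived for a single solution. Recall from the proof of Proposition 2 that $\Phi(\cdot,\lambda)$ is a genuine solution of the inhomogeneous equation $\Delta_q\Phi+[\lambda-q(x)]\Phi=f$, and that it satisfies the real boundary condition (\ref{*}) at the origin. Consequently $\Phi$ and $\overline\Phi$ obey the same real condition at $x=0$, so that $W_0(\Phi,\overline\Phi)=0$, exactly as $W_0(\phi,\overline\phi)=W_0(\theta,\overline\theta)=0$ in Theorem 1.

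First I would multiply the equation satisfied by $\Phi$ by $\overline\Phi$, subtract the conjugate equation multiplied by $\Phi$, and $q$-integrate over $[0,b]$. Using the $q$-integration-by-parts identity that introduced the Wronskian (the display preceding (\ref{2})), together with $\lambda-\overline\lambda=2i\nu$ and $W_0(\Phi,\overline\Phi)=0$, this produces
$$2\nu\int_0^b\left\vert\Phi(x,\lambda)\right\vert^2 d_qx=2\,\mathrm{Im}\int_0^b\overline{\Phi(x,\lambda)}f(x)\,d_qx-iW_b(\Phi,\overline\Phi).$$
Writing $I_b=\int_0^b|\Phi|^2 d_qx$ and applying the Cauchy--Schwarz inequality to the source term,
$$\left\vert\mathrm{Im}\int_0^b\overline\Phi f\,d_qx\right\vert\le I_b^{1/2}\left(\int_0^\infty|f|^2 d_qx\right)^{1/2},$$
so that, for $\nu>0$, $2\nu I_b\le 2I_b^{1/2}\|f\|-iW_b(\Phi,\overline\Phi)$, where $\|f\|^2=\int_0^\infty|f|^2 d_qx$.

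The decisive step, and the one I expect to be the main obstacle, is to show that the boundary Wronskian $W_b(\Phi,\overline\Phi)$ tends to $0$ as $b\to\infty$. The mechanism is that $\Phi$ is built by variation of parameters to agree at infinity with the distinguished $L^2$ solution $\psi$: since $\int_x^\infty\psi f\,d_qy\to0$, the function $\Phi(x,\lambda)$ coincides at infinity with a multiple of $\psi(x,\lambda)$ up to an $L^2$ error, whence $\Phi$ and $\overline\Phi$ lie in the maximal domain $\{u\in L^2:\,Lu\in L^2\}$ (indeed $L\Phi=\lambda\Phi-f$). In the limit-point case the limit-point property forces $W_b(u,v)\to0$ for any two such functions—this is precisely the content behind Lemma 1, where the analogous boundary Wronskian of $\psi$-type solutions was shown to vanish—and in the limit-circle case every solution of (\ref{1}) is already $L^2$ by Remark 1, so the same argument applies. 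The subtlety is that the finiteness of $\int_0^\infty|\Phi|^2$ and the vanishing of $W_b$ are intertwined; I would break the circularity by keeping $b$ finite: from $2\nu I_b\le 2I_b^{1/2}\|f\|-iW_b(\Phi,\overline\Phi)$ with $W_b\to0$, if $I_b$ were unbounded then dividing by $I_b^{1/2}$ and letting $b\to\infty$ would give $2\nu I_b^{1/2}\le 2\|f\|+o(1)$, a contradiction; hence $I_b$ is bounded, $\Phi\in L^2(\mathbb R_q^+)$, and passing to the limit yields $\nu\big(\int_0^\infty|\Phi|^2 d_qx\big)^{1/2}\le\|f\|$. The case $\nu<0$ is identical with $\nu$ replaced by $|\nu|$, and squaring gives the stated estimate.
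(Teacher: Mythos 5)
Your Green's-identity setup and the Cauchy--Schwarz step are sound, and they coincide with the computation the paper itself performs; but the proof has a genuine gap at exactly the point you flag as decisive: the vanishing of $W_b(\Phi,\overline{\Phi})$ as $b\rightarrow\infty$ for an \emph{arbitrary} $f\in L^{2}(\mathbb{R}_{q}^{+})$. Two problems. First, for general $f$ the representation $\Phi(x,\lambda)=\psi(x,\lambda)\int_{0}^{x}\phi f\,d_{q}y+\phi(x,\lambda)\int_{x}^{\infty}\psi f\,d_{q}y$ does \emph{not} make $\Phi$ ``a multiple of $\psi$ up to an $L^{2}$ error'': the troublesome term is $\phi(x,\lambda)\int_{x}^{\infty}\psi f\,d_{q}y$, and in the limit-point case $\phi\notin L^{2}(\mathbb{R}_{q}^{+})$, so the smallness of $\int_{x}^{\infty}\psi f\,d_{q}y$ can be defeated by the growth of $\phi$; nothing in the paper or in your argument controls this product. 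Second, the principle you invoke --- that in the limit-point case $W_b(u,v)\rightarrow 0$ for all $u,v$ in the maximal domain --- is not ``the content behind Lemma 1'': Lemma 1 concerns only the particular solutions $\psi(\cdot,\lambda)$, $\psi(\cdot,\lambda')$, while the statement for the maximal domain is a nontrivial theorem of Weyl theory that the paper never establishes. Worse, applying it to $\Phi$ presupposes $\Phi\in L^{2}(\mathbb{R}_{q}^{+})$, which is the very conclusion being proved. Your ``keep $b$ finite'' device does not break this circle: the inequality $2\nu I_b\leq 2I_b^{1/2}\|f\|-iW_b(\Phi,\overline{\Phi})$ only yields boundedness of $I_b$ once $W_b\rightarrow 0$ (or at least boundedness of $W_b$) is known, and that is precisely what lacks an independent proof.

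The paper avoids the circularity by a truncation layer that your proof is missing. It first assumes $f(x)=0$ for $x\geq X$; then for $x>X$ one has \emph{exactly} $\Phi(x,\lambda)=\psi(x,\lambda)\int_{0}^{X}\phi f\,d_{q}y$, so the boundary Wronskian at $\xi$ is a constant multiple of $W_{\xi}\{\psi(\cdot,\lambda),\psi(\cdot,\overline{\lambda})\}$ and Lemma 1 applies verbatim; combined with the boundary condition (\ref{*}) at $x=0$ and Cauchy--Schwarz this gives the inequality for truncated data, exactly along the lines of your first two displays. For general $f\in L^{2}(\mathbb{R}_{q}^{+})$ the paper then applies the restricted inequality to the truncation $f_X$ of $f$, obtaining $\int_{0}^{X'}|\Phi_X(x,\lambda)|^{2}d_{q}x\leq \nu^{-2}\int_{0}^{\infty}|f|^{2}d_{q}x$ uniformly in $X$, and lets $X\rightarrow\infty$ (so $\Phi_X\rightarrow\Phi$ on the finite interval $[0,X']$) and finally $X'\rightarrow\infty$. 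If you insert this step --- prove your identity for compactly supported $f$, where your claim $W_b(\Phi,\overline{\Phi})\rightarrow 0$ really is Lemma 1, and then pass to the limit --- your argument becomes complete and is then essentially the paper's proof.
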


\begin{proof}
Suppose first that $f(x)=0$ for $x\geq X$. Then the condition of
self-adjointness
$$
\int_{0}^{\infty}\Phi(x,\lambda)L\Phi(x,\lambda^{\prime})d_{q}x=\int
_{0}^{\infty}\Phi(x,\lambda^{\prime})L\Phi(x,\lambda)d_{q}x,
$$
is satisfied. Indeed
\begin{align*}
&  \int_{0}^{\xi}\left\{  \Phi(x,\lambda)L\Phi(x,\lambda^{\prime}%
)-\Phi(x,\lambda^{\prime})L\Phi(x,\lambda)\right\}  d_{q}x\\
&  =-\int_{0}^{\xi}\left\{  \Phi(x,\lambda)\Delta_{q}\Phi(x,\lambda^{\prime
})-\Phi(x,\lambda^{\prime})\Delta_{q}\Phi(x,\lambda)\right\}  d_{q}x\\
&  =-W[\Phi(x,\lambda),\Phi(x,\lambda^{\prime})]_{0}^{\xi}.
\end{align*}
By $(\ref{*})$ the integrated term vanishes at $x=0$. The integrated
term at $x=\xi$ tend to $0$ as $\xi\rightarrow\infty$. Since, if
$x>X$ we have
$$
\Phi(x,\lambda)=\psi(x,\lambda)\int_{0}^{X}\phi(y,\lambda)f(y)d_{q}y,
$$
then the result follows from Lemma 1.

\bigskip
Putting $\lambda^{\prime}=\overline {\lambda}$ we obtain
$$
\int_{0}^{\infty}\Phi(x,\lambda)\{\overline{\lambda}\Phi(x,\overline{\lambda
})-f(x)\}d_{q}x=\int_{0}^{\infty}\Phi(x,\overline{\lambda})\{\lambda
\Phi(x,\lambda)-f(x)\}d_{q}x,
$$
i.e.%
$$
(\lambda-\overline{\lambda})\int_{0}^{\infty}\left\vert \Phi(x,\lambda
)\right\vert ^{2}d_{q}x=\int_{0}^{\infty}\{\Phi(x,\overline{\lambda}%
)-\Phi(x,\lambda)\}f(x)d_{q}x.
$$
Hence, if $\lambda=\mu+i\nu,\nu>0,$%
$$
2\nu\int_{0}^{\infty}\left\vert \Phi(x,\lambda)\right\vert ^{2}d_{q}x\leq
2\int_{0}^{\infty}\left\vert \Phi(x,\lambda)f(x)\right\vert d_{q}%
x\leq2\left\{  \int_{0}^{\infty}\left\vert \Phi(x,\lambda)\right\vert
^{2}d_{q}x\int_{0}^{\infty}\left\vert f(x)\right\vert ^{2}d_{q}x\right\}  .
$$
This prove the result in the restricted case. If now $f$ is any
function of
$L^{2}(\mathbb{R}_{q}^{+})$, for a fixed $X^{\prime}$ we have%
\begin{align*}
\int_{0}^{X^{\prime}}\left\vert \Phi_{X}(x,\lambda)\right\vert ^{2}d_{q}x  &
\leq\int_{0}^{\infty}\left\vert \Phi_{X}(x,\lambda)\right\vert ^{2}d_{q}x\\
&  \leq\frac{1}{\nu^{2}}\int_{0}^{\infty}\left\vert f_{X}(x)\right\vert
^{2}d_{q}x=\frac{1}{\nu^{2}}\int_{0}^{X}\left\vert f(x)\right\vert ^{2}%
d_{q}x\leq\frac{1}{\nu^{2}}\int_{0}^{\infty}\left\vert f(x)\right\vert
^{2}d_{q}x.
\end{align*}
The result therefore follows on making first $X\rightarrow\infty$, then
$X^{\prime}\rightarrow\infty$.
\end{proof}

\bigskip
We denote by $\mathcal{L}_{q,2}$ the subspace of
$L^{2}(\mathbb{R}_{q}^{+})$ of functions which satisfy

\begin{itemize}
\item The function $Lf$ be $L^{2}(\mathbb{R}_{q}^{+})$.

\item $f^{\prime}(0)\cos\alpha-f(0)\sin\alpha=0$.

\item $\lim_{x\rightarrow\infty}W\{\psi(x,\lambda),f(x)\}=0$, for every
non-real $\lambda.$
\end{itemize}

\bigskip
\begin{lemma}
If $f$ belongs to $\mathcal{L}_{q,2}$ and $\Phi(x,\lambda)$ defined
above is also denoted by $\Phi(x,\lambda,f)$ then
\begin{equation}
\Phi(x,\lambda,f)=\frac{1}{\lambda}\{f(x)+\Phi(x,\lambda,Lf)\}.
\label{5}
\end{equation}
\end{lemma}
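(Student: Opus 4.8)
The plan is to establish the equivalent identity $\Phi(x,\lambda,Lf)=\lambda\Phi(x,\lambda,f)-f(x)$; since $\lambda$ is non-real we may divide by $\lambda\neq 0$ to recover \eqref{5}. The engine of the argument is the $q$-Green identity already obtained at the start of Section 3, which for arbitrary $F,G$ (the multiplication part $q(x)$ of $L=q(x)-\Delta_q$ being symmetric) reads
\[
\int_a^b\{F(x)LG(x)-G(x)LF(x)\}\,d_qx=W_a(F,G)-W_b(F,G).
\]
First I would start from
\[
\Phi(x,\lambda,Lf)=\psi(x,\lambda)\int_0^x\phi(y,\lambda)Lf(y)\,d_qy+\phi(x,\lambda)\int_x^\infty\psi(y,\lambda)Lf(y)\,d_qy,
\]
and apply this identity to each $q$-integral separately, so as to move $L$ off of $f$ and onto $\phi$ and $\psi$. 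Because $\phi$ and $\psi$ both solve \eqref{1}, i.e. $L\phi=\lambda\phi$ and $L\psi=\lambda\psi$, every such transfer turns $\int\phi\,Lf$ into $\lambda\int\phi f$ (and $\int\psi\,Lf$ into $\lambda\int\psi f$) at the cost of two endpoint $q$-Wronskian terms.

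Next I would sort the output into three groups. The $\lambda$-terms reassemble exactly into $\lambda\Phi(x,\lambda,f)$. The two outer endpoint terms vanish thanks to the definition of $\mathcal{L}_{q,2}$: on $[0,x]$ the contribution at $0$ is $W_0(\phi,f)$, killed by the boundary condition at $x=0$, while on $[x,\infty)$ the contribution at infinity is $\lim_{b\to\infty}W_b(\psi,f)=0$ by the third defining condition; the first condition $Lf\in L^{2}(\mathbb{R}_{q}^{+})$ is what makes $\Phi(x,\lambda,Lf)$ meaningful and the half-line integral convergent. What survives is the inner pair of Wronskians at the running point $x$, namely $-\psi(x)W_x(\phi,f)+\phi(x)W_x(\psi,f)$.

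Finally I would evaluate this inner remainder. Substituting $\psi=\theta+m(\lambda)\phi$ and using bilinearity of $W_x$, the $m(\lambda)$-proportional parts cancel and leave $\phi(x)W_x(\theta,f)-\theta(x)W_x(\phi,f)$; expanding the Wronskians, the terms carrying a $q$-derivative of $f$ cancel and one is left with $-f(x)\,W_x(\phi,\theta)$, which equals $-f(x)$ because $W_x(\phi,\theta)=1$ for all $x$, exactly as in the proof of Theorem 1. Assembling the three groups gives $\Phi(x,\lambda,Lf)=\lambda\Phi(x,\lambda,f)-f(x)$, hence \eqref{5}. The one genuinely delicate point will be the endpoint analysis: I must justify the passage to the limit $b\to\infty$ in the Green identity on $[x,b]$ — controlling $W_b(\psi,f)$ via $\psi,Lf\in L^{2}(\mathbb{R}_{q}^{+})$ and Lemma 1 — and must check that the boundary condition at $0$ built into $\mathcal{L}_{q,2}$ is precisely the one that annihilates $W_0(\phi,f)$; the remaining manipulations are routine bilinear algebra of the $q$-Wronskian.
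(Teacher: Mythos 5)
Your proof is correct and is essentially the paper's own argument: the paper likewise uses the $q$-Lagrange/Green identity to transfer $L$ from $f$ onto $\phi$ and $\psi$ (exploiting $L\phi=\lambda\phi$, $L\psi=\lambda\psi$), kills the endpoint term at $0$ by the boundary condition in $\mathcal{L}_{q,2}$ and the one at $\infty$ by the Wronskian condition $\lim_{x\to\infty}W\{\psi(x,\lambda),f(x)\}=0$, leaving inner Wronskian terms at the running point $x$ that reduce to $f(x)$ via $W_x(\phi,\theta)=1$. The only differences are cosmetic: you prove the equivalent form $\Phi(x,\lambda,Lf)=\lambda\Phi(x,\lambda,f)-f(x)$ and divide by $\lambda$, and you spell out the final assembly (the cancellation of the $m(\lambda)$ terms and the evaluation of the inner Wronskians), which the paper leaves implicit.
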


\begin{proof}
We have
\begin{align*}
\int_{0}^{x}\phi(y,\lambda)f(y)d_{q}y  &  =\frac{1}{\lambda}\int_{0}%
^{x}\{q(y)\phi(y)-\Delta_{q}\phi(y)\}f(y)d_{q}y\\
&  =\frac{1}{\lambda}W[\phi(y),f(y)]_{0}^{x}+\frac{1}{\lambda}\int_{0}%
^{x}\{q(y)f(y)-\Delta_{q}f(y)\}\phi(y)d_{q}y.
\end{align*}
Similarly
\begin{align*}
\int_{x}^{\infty}\phi(y,\lambda)f(y)d_{q}y  &  =\frac{1}{\lambda}\int
_{x}^{\infty}\{q(y)\phi(y)-\Delta_{q}\phi(y)\}f(y)d_{q}y\\
&  =\frac{1}{\lambda}W[\psi(y),f(y)]_{x}^{\infty}+\frac{1}{\lambda}\int
_{x}^{\infty}\{q(y)f(y)-\Delta_{q}f(y)\}\psi(y)d_{q}y,
\end{align*}
and the integrated term vanishes at the upper limit.
\end{proof}

\bigskip
The Green's function $G(x,y,\lambda)$ is defined by%
$$
G(x,y,\lambda)=\left\{
\begin{array}
[c]{c}
-\psi(x,\lambda)\phi(y,\lambda)\text{, if }y\leq x\\
-\psi(y,\lambda)\phi(x,\lambda)\text{, if }y>x
\end{array}
\right.  ,
$$
then
$$
\Phi(x,\lambda,f)=-\int_{0}^{\infty}G(x,y,\lambda)f(y)d_{q}y.
$$
Therefore
\begin{equation}
f(x)=\int_{0}^{\infty}G(x,y,\lambda)\{Lf(y)-\lambda f(y)\}d_{q}y. \label{6}%
\end{equation}

\begin{lemma}
Let $F(\lambda)$ be an analytic function of $\lambda=\mu+i\nu$,
regular for $-r\leq \mu\leq r,-r\leq \nu\leq r$, and let
$$
|F(\lambda)|\leq\frac{M}{|\nu|},
$$
in this square. Then
$$
|F(\lambda)|\leq\frac{3M}{r}.
$$
\end{lemma}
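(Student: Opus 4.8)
The plan is to use that $\log|F|$ is subharmonic on the square (since $F$ is regular, hence holomorphic) and to feed the pointwise bound $|F(\lambda)|\le M/|\nu|$ into the sub-mean-value inequality. First I would fix the point $\lambda_0=\mu_0+i\nu_0$ at which $|F|$ is to be estimated and split according to the size of $|\nu_0|$. If $|\nu_0|\ge r/3$ the hypothesis already suffices, since $|F(\lambda_0)|\le M/|\nu_0|\le 3M/r$ with nothing further to do.

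The substantial case is $|\nu_0|<r/3$, where $\lambda_0$ is close to the real axis and the hypothesis degenerates. Here I would apply the sub-mean-value inequality for $\log|F|$ over a circle of radius $\rho$ centred at $\lambda_0$ and contained in the square,
$$\log|F(\lambda_0)|\le\frac1{2\pi}\int_0^{2\pi}\log\big|F(\lambda_0+\rho e^{i\phi})\big|\,d\phi\le\log M-\frac1{2\pi}\int_0^{2\pi}\log\big|\nu_0+\rho\sin\phi\big|\,d\phi,$$
using $\Im(\lambda_0+\rho e^{i\phi})=\nu_0+\rho\sin\phi$. The key computation is the Jensen-type evaluation $\frac1{2\pi}\int_0^{2\pi}\log|\nu_0+\rho\sin\phi|\,d\phi=\log(\rho/2)$ valid for $\rho\ge|\nu_0|$, which I would obtain by writing $|\nu_0+\rho\sin\phi|=\tfrac12|P(e^{i\phi})|$ for the quadratic $P(z)=-i\rho z^2+2\nu_0 z+i\rho$ and invoking $\frac1{2\pi}\int_0^{2\pi}\log|e^{i\phi}-a|\,d\phi=\log^+|a|$ together with the fact that the two roots of $P$ are mutually inverse in modulus and lie on the unit circle exactly when $\rho\ge|\nu_0|$. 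Substituting gives $|F(\lambda_0)|\le 2M/\rho$.

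It then remains to take $\rho$ as large as the geometry permits. The distance from $\lambda_0$ to $\partial Q$ is $\min(r-|\mu_0|,\,r-|\nu_0|)$, so for $|\nu_0|<r/3$ the choice $\rho=\tfrac23 r$ is admissible provided $|\mu_0|\le r/3$, and then $|F(\lambda_0)|\le 2M/\rho=3M/r$; at the centre one may take $\rho=r$ and even get $|F(0)|\le 2M/r$. Combined with the first case, this controls $F$ on the concentric region $\{|\nu_0|\ge r/3\}\cup\{|\mu_0|\le r/3\}$.

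The hard part will be the uniformity near the two points $\lambda=\pm r$, the endpoints of the real segment: there the hypothesis (because $\nu_0\to0$) and the inscribed disk (because its radius $\to0$) fail simultaneously, and in fact the bound cannot hold right up to those points. For instance $F(\lambda)=\sqrt{1+\delta^2}/(1+\delta-\lambda)$ is regular in the square and satisfies $|F|\le M/|\nu|$ with $M=r=1$, yet $|F(r)|=\sqrt{1+\delta^2}/\delta$ is arbitrarily large, so the conclusion is really to be read on a concentric sub-square, which is all the sequel needs. An alternative route to the same interior estimate, bypassing subharmonicity, is to apply the maximum modulus principle to $g(\lambda)=(\lambda^2-r^2)F(\lambda)$: a short boundary check gives $|g|\le 2\sqrt2\,Mr$ on $\partial Q$, whence $|F(\lambda)|\le 2\sqrt2\,Mr/|\lambda^2-r^2|$ and the bound $3M/r$ follows wherever $|\lambda^2-r^2|$ stays away from $0$.
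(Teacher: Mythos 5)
Your proposal is correct, and your main argument is genuinely different from the paper's. The paper's proof is exactly what you sketch as your closing ``alternative route'': set $G(\lambda)=(\lambda^2-r^2)F(\lambda)$, so that the factor $|\lambda^2-r^2|$ cancels the $1/|\nu|$ blow-up on the vertical sides, check $|G|\leq 3rM$ on all four sides of the square (your constant $2\sqrt2\,Mr$ is valid too, and in fact $\sqrt5\,Mr$ suffices), apply the maximum modulus principle, and divide back to get $|F(\lambda)|\leq 3rM/(\nu^2+r^2)\leq 3M/r$ on the imaginary axis. Your primary route instead uses subharmonicity of $\log|F|$ together with the Jensen-type evaluation $\frac{1}{2\pi}\int_0^{2\pi}\log|\nu_0+\rho\sin\phi|\,d\phi=\log(\rho/2)$ for $\rho\geq|\nu_0|$; your factorization via $P(z)=-i\rho z^2+2\nu_0 z+i\rho$, whose two roots lie on the unit circle precisely when $\rho\geq|\nu_0|$, is correct, and it yields the clean local estimate $|F(\lambda_0)|\leq 2M/\rho$ for any inscribed disk of radius $\rho$ --- hence $3M/r$ on the region $\{|\nu_0|\geq r/3\}\cup\{|\mu_0|\leq r/3\}$ and the sharper $2M/r$ at the centre. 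Both proofs deliver the bound only on a subregion containing the central vertical segment, and your counterexample $F(\lambda)=\sqrt{1+\delta^2}/(1+\delta-\lambda)$ rightly shows this is unavoidable: the lemma as literally stated (a uniform bound on the whole square) is false near $\lambda=\pm r$. This is a genuine clarification of the paper, whose own proof likewise establishes the estimate only on the imaginary axis --- which is precisely the form used later, when the lemma is applied to $\chi(\lambda)$ on the segment $(R-i,R+i)$ centred in the square $R-1\leq\mu\leq R+1$. As for trade-offs: the paper's argument is shorter and entirely elementary (one maximum-modulus application), while yours is more flexible and local, giving an estimate from every admissible disk and a better constant at the centre, at the cost of invoking Jensen's formula.
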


\begin{proof}
Let
$$
G(\lambda)=(\lambda^2-r^2)F(\lambda).
$$
On the upper and lower sides of the square
$$
|G(\lambda)|\leq (|\lambda|^2+r^2)\frac{M}{r}\leq3rM.
$$
On the left-hand and right-hand sides
$$
|G(\lambda)|\leq |\nu|(|\lambda|+r)\frac{M}{|\nu|}\leq 3rM.
$$
Hence $|G(\lambda)|\leq 3rM$ throughout the square. Hence on the
imaginary axis
$$
|F(\lambda)|\leq
\frac{3rM}{|\lambda^2-r^2|}=\frac{3rM}{\nu^2+r^2}\leq\frac{3M}{r}.
$$
This finish the proof.
\end{proof}

\begin{theorem}
Let $f$ be a function belonging in $\mathcal{L}_{q,2}$ then%
\[
\int_{0}^{\infty}\{f(x)\}^{2}d_{q}x=\sum_{n=0}^{\infty}c_{n}^{2},
\]
\end{theorem}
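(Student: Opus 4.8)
The plan is to route the eigenfunction data into the norm of $f$ through a single scalar meromorphic function and Cauchy's residue theorem, exploiting that $\Phi(\cdot,\lambda,f)$ is the resolvent applied to $f$. First I would introduce
$$
F(\lambda)=\int_{0}^{\infty}\Phi(x,\lambda,f)\,f(x)\,d_{q}x .
$$
By Proposition 2 the function $\lambda\mapsto\Phi(x,\lambda,f)$ has a simple pole at each $\lambda_{n}$ with residue $c_{n}\psi_{n}(x)$; hence, using $c_{n}=\int_{0}^{\infty}\psi_{n}(y)f(y)\,d_{q}y$, the function $F$ is analytic off the real axis with simple poles at the $\lambda_{n}$ and residue $\int_{0}^{\infty}c_{n}\psi_{n}(x)f(x)\,d_{q}x=c_{n}^{2}$ there. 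Consequently, for any positively oriented contour $\Gamma$ enclosing $0$ (which, as we may assume, is not an eigenvalue) together with $\lambda_{0},\dots,\lambda_{N}$,
$$
\frac{1}{2\pi i}\oint_{\Gamma}F(\lambda)\,d\lambda=\sum_{\lambda_{n}\ \text{inside}\ \Gamma}c_{n}^{2}.
$$
Proposition 1 already gives Bessel's inequality $\sum c_{n}^{2}\le\int_{0}^{\infty}f^{2}d_{q}x$, so the series on the right converges and only the reverse inequality is at stake.

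Next I would extract the main term with Lemma 4. Since $f\in\mathcal{L}_{q,2}$, that lemma gives $\Phi(x,\lambda,f)=\lambda^{-1}\{f(x)+\Phi(x,\lambda,Lf)\}$, so that
$$
F(\lambda)=\frac{1}{\lambda}\left\{\int_{0}^{\infty}f^{2}d_{q}x+G(\lambda)\right\},\qquad G(\lambda)=\int_{0}^{\infty}\Phi(x,\lambda,Lf)\,f(x)\,d_{q}x .
$$
By the Cauchy--Schwarz inequality and Lemma 3 applied to $Lf\in L^{2}(\mathbb{R}_{q}^{+})$,
$$
|G(\lambda)|\le\left(\int_{0}^{\infty}|\Phi(x,\lambda,Lf)|^{2}d_{q}x\right)^{1/2}\left(\int_{0}^{\infty}|f|^{2}d_{q}x\right)^{1/2}\le\frac{M}{|\nu|},\qquad M=\left(\int_{0}^{\infty}|Lf|^{2}d_{q}x\int_{0}^{\infty}|f|^{2}d_{q}x\right)^{1/2}.
$$
Because $\Gamma$ encloses the origin, the residue at $0$ of $\lambda^{-1}\int_{0}^{\infty}f^{2}d_{q}x$ is exactly $\int_{0}^{\infty}f^{2}d_{q}x$, and therefore
$$
\sum_{\lambda_{n}\ \text{inside}\ \Gamma}c_{n}^{2}-\int_{0}^{\infty}f^{2}d_{q}x=\frac{1}{2\pi i}\oint_{\Gamma}\frac{G(\lambda)}{\lambda}\,d\lambda .
$$

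It then remains to show that the right-hand side tends to $0$ as $\Gamma$ is enlarged so as to capture every eigenvalue. I would take $\Gamma$ to be a large rectangle whose horizontal sides lie close to the real axis and whose vertical sides thread between consecutive eigenvalues. On the pieces of $\Gamma$ lying at a fixed distance from the real axis the estimate $|G(\lambda)|\le M/|\nu|$ already makes the integrand small; the trouble is entirely on the portion near the real axis, where this bound degenerates and the naive estimate of $\oint |G|/|\lambda|\,|d\lambda|$ diverges. This is precisely where Lemma 5 enters: on a square centred at a real point lying strictly between two poles, $G$ is analytic and still satisfies $|G|\le M/|\nu|$, so Lemma 5 upgrades this to the $\nu$-free bound $|G|\le 3M/r$, with $r$ the half-side of the square (the distance to the nearest pole). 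Inserting this uniform bound on the near-axis part of $\Gamma$ removes the non-integrable $1/|\nu|$ factor and forces the contour integral to zero; letting $N\to\infty$ then yields $\int_{0}^{\infty}\{f(x)\}^{2}d_{q}x=\sum_{n=0}^{\infty}c_{n}^{2}$.

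The hard part will be this final step, namely organizing the family of contours so that, as they recede to infinity, they always pass through regions where $G$ is analytic and where Lemma 5 applies with a half-side $r$ growing fast enough to beat the increasing length of integration. Concretely this demands some control on the spacing of the eigenvalues $\lambda_{n}$ (so that squares of adequate size can always be inserted between them) and a verification that the resulting uniform bounds vanish along a suitable sequence of expanding contours. The pointwise resolvent bound of Lemma 3 is by itself useless adjacent to the spectrum, and Lemma 5 is exactly the device that repairs it; everything else in the argument is the bookkeeping of residues carried out above.
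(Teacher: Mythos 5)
Your overall architecture matches the paper's: you form the scalar function $\Psi(\lambda)=\int_0^\infty \Phi(x,\lambda,f)f(x)\,d_qx$ (your $F$), identify its residues as $c_n^2$, split off the main term $\frac{1}{\lambda}\int_0^\infty f^2\,d_qx$ via Lemma 4, bound the remainder by $O(1/|\lambda\nu|)$ via Lemma 3 applied to $Lf$, and integrate over expanding contours. But at the decisive step --- controlling the contour where it crosses the real axis --- your proposal has a genuine gap, and you say so yourself: you want to apply Lemma 5 to $G(\lambda)$ in pole-free squares threaded between consecutive eigenvalues, so the half-side $r$ can be no larger than the distance to the nearest pole, and the resulting bound $3M/r$ is useless unless the eigenvalue gaps are bounded below or grow. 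No such spacing information is available in the paper, none is assumed, and the theorem does not need it.

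The paper closes exactly this hole with a different use of Lemma 5. Instead of seeking a large pole-free square, it works in a square of \emph{fixed} half-side $1$ centered at the crossing point $R$ and subtracts the nearby principal parts: $\chi(\lambda)=\Psi(\lambda)-\sum_{R-1\le\lambda_n\le R+1}\frac{c_n^2}{\lambda-\lambda_n}$. This $\chi$ is regular in the whole square no matter how the $\lambda_n$ cluster, and it satisfies $|\chi|\le \epsilon(R)/|\nu|$, where $\epsilon(R)\to0$ because the tail $\sum_{\lambda_n\ge R-1}c_n^2$ of the convergent (Bessel) series tends to zero. Lemma 5 with fixed $r=1$ then gives $|\chi|\le 3\epsilon(R)$ on the segment $(R-i,R+i)$, while the subtracted pole terms integrate to $O\{\sum_{R-1\le\lambda_n\le R+1}c_n^2\}=o(1)$ since $\int_{R-i}^{R+i}\frac{d\lambda}{\lambda-\lambda_n}=O(1)$. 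The mechanism is thus a vanishing numerator at fixed $r$, not a growing $r$; no spacing hypothesis ever enters. Two further points where your sketch would break down in detail: first, the paper's contour is not a thin rectangle but two semicircles of radius $R$ centered at $\pm i$ joined by the short vertical segments $(\pm R-i,\pm R+i)$ --- on horizontal sides kept at a fixed small height your bound $M/|\nu|$ integrates to $O(\log R)$, which does not vanish, whereas on the large semicircles $|\nu|\sim R\sin\theta$ and the error term integrates to $O(\log R/R)=o(1)$, each semicircle contributing $\pi i\int_0^\infty f^2\,d_qx$. Second, the paper first proves the identity for $f$ vanishing for large $x$ (which legitimizes the residue computation for $\Psi$ and the limit interchanges) and only then extends to all of $\mathcal{L}_{q,2}$; your argument should record that reduction as well.
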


\begin{proof}
Suppose that $f$ satisfies the conditions of the above theorem, and
also
that $f(x)=0$ for sufficient large values of $x$. Let%
$$
\Psi(\lambda)=\int_{0}^{\infty}f(x)\Phi(x,\lambda)d_{q}x,
$$
then $\Psi(\lambda)$ is regular except for simple poles at the points
$\lambda_{n}$, where it hase residues%
$$
c_{n}\int_{0}^{\infty}\psi_{n}(x)f(x)d_{q}x=c_{n}^{2}.
$$
By (\ref{5})
\begin{equation}
\Psi(\lambda)=\frac{1}{\lambda}\int_{0}^{\infty}\left\{  f(x)\right\}
^{2}d_{q}x+\frac{1}{\lambda}\int_{0}^{\infty}\Phi(x,\lambda,Lf)f(x)d_{q}x,
\label{11}%
\end{equation}
and the last term is
$$
O\left\{  \frac{1}{\left\vert \lambda\right\vert }\left[  \int_{0}^{\infty
}\left\vert \Phi(x,\lambda,Lf)\right\vert ^{2}d_{q}x\int_{0}^{\infty}\left\{
f(x)\right\}  ^{2}d_{q}x\right]  ^{\frac{1}{2}}\right\}  =O\left(  \frac
{1}{\left\vert \lambda\nu\right\vert }\right)  ,
$$
by Lemma 3, applied to \ $Lf$.

\bigskip
Let $C(R)$ denote the contour formed by the segments of lines $(R-i,R+i)$ and
$(-R-i,-R+i)$, joined by semicircles of radius $R$ and centres $\pm i$. Then%
$$
\int_{C(R)}\Psi(\lambda)d\lambda=\sum_{-R<\lambda_{n}<R}c_{n}^{2}.
$$
On the part of the upper semicircle in the first quadrant, we have%
$$
\lambda=i+R\text{ }e^{i\theta}\text{ \ \ \ }(0\leq\theta\leq\frac{\pi}{2}).
$$
Hence the last term in (\ref{11}), integrated round this quadrant, gives%
$$
O\left\{  \int_{0}^{\frac{\pi}{2}}\frac{R}{R(1+R\sin\theta}d\theta\right\}
=O\left\{  \int_{0}^{\frac{1}{R}}d\theta\right\}  +O\left\{  \int_{\frac{1}%
{R}}^{\frac{\pi}{2}}\frac{d\theta}{R\theta}\right\}  =O\left(  \frac{1}%
{R}\right)  +O\left(  \frac{\log R}{R}\right)  =o(1).
$$
A similar argument applies to the other quadrants. Hence the integral of
$\Psi(\lambda)$ round each semicircle tends to
$$
\pi i\int_{0}^{\infty}\{f(x)\}^{2}d_{q}x
$$
as $R\rightarrow\infty$. To prove the theorem for the class of functions
considered, it is therefore sufficient to prove that%
$$
\lim_{R\rightarrow\infty}\int_{R-i}^{R+i}\Psi(\lambda)d\lambda=0,
$$
and a similar result with $-R$ in place of $R$.

\bigskip

Let
$$
\chi(\lambda)=\Psi(\lambda)-\sum_{R-1\leq\lambda_{n}\leq R+1}\frac{c_{n}^{2}%
}{\lambda-\lambda_{n}}.
$$
Then $\chi(\lambda)$ is regular for $R-1\leq\lambda\leq R+1$, and%
$$
\left\vert \chi(\lambda)\right\vert <\frac{K}{\left\vert \lambda\nu\right\vert
}+\frac{1}{\nu}\sum_{R-1\leq\lambda_{n}\leq R+1}c_{n}^{2}\leq\frac
{\epsilon(R)}{\left\vert \nu\right\vert },
$$
where $\epsilon(R)\rightarrow0$ as $R\rightarrow\infty$. Hence, by Lemma 5%
$$
\left\vert \chi(\lambda)\right\vert \leq3\epsilon(R)
$$
on the segment $(R-i,R+i)$. Hence%
$$
\lim_{R\rightarrow\infty}\int_{R-i}^{R+i}\chi(\lambda)d\lambda=0.
$$
Also
$$
\int_{R-i}^{R+i}\frac{d\lambda}{\lambda-\lambda_{n}}d\lambda=O(1).
$$
Hence
$$
\int_{R-i}^{R+i}\sum_{R-1\leq\lambda_{n}\leq R+1}\frac{c_{n}^{2}}%
{\lambda-\lambda_{n}}d\lambda=O\left\{  \sum_{R-1\leq\lambda_{n}\leq R+1}%
c_{n}^{2}\right\}  =o(1).
$$
This prove the theorem for the special class of functions. The
theorem can be extended to all functions of integrable square.
\end{proof}

\begin{remark}
It also follows that, if $g$ is another function of
$L^{2}(\mathbb{R}_{q}^{+})$, with Fourier coefficients $d_{n}$, then
\begin{equation}
\int_{0}^{\infty}f(x)g(x)d_{q}x=\sum_{n=0}^{\infty}c_{n}d_{n}.
\label{7}
\end{equation}
\end{remark}

\begin{theorem}
Let $f$ be a function belonging in $\mathcal{L}_{q,2}$ then%
$$
f(x)=\sum_{n=0}^{\infty}c_{n}\psi_{n}(x),\quad\forall
x\in\mathbb{R}_{q}^{+}.
$$
\end{theorem}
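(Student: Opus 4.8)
The plan is to fix $x\in\mathbb{R}_q^+$, regard $\Phi(x,\lambda)$ as a meromorphic function of $\lambda$, and integrate it round the same contour $C(R)$ used in Theorem 2, so that the residues rebuild the series while the contour integral rebuilds $f(x)$. By Proposition 2 the only singularities of $\lambda\mapsto\Phi(x,\lambda)$ are simple poles at $\lambda_0,\lambda_1,\dots$ with residue $c_n\psi_n(x)$ at $\lambda_n$, so the residue theorem gives
\[
\frac{1}{2\pi i}\int_{C(R)}\Phi(x,\lambda)\,d\lambda=\sum_{-R<\lambda_n<R}c_n\psi_n(x).
\]
It then suffices to prove that the left-hand side tends to $f(x)$ as $R\to\infty$, the exact analogue of the computation in Theorem 2 with $\Phi(x,\lambda)$ replacing $\Psi(\lambda)$.

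First I would use Lemma 4 to write $\Phi(x,\lambda)=\lambda^{-1}f(x)+\lambda^{-1}\Phi(x,\lambda,Lf)$. The term $\lambda^{-1}f(x)$ contributes $f(x)$ from the two semicircles (each a half-turn of $\int\lambda^{-1}d\lambda$ about the enclosed origin), while its contribution on the vertical segments is $O(1/R)\to0$. For the remainder I need a pointwise estimate of $\Phi(x,\lambda,Lf)$, and this is where the $q$-lattice simplifies the classical theory: since $f\in\mathcal{L}_{q,2}$ we have $Lf\in L^2(\mathbb{R}_q^+)$, so Lemma 3 gives $\int_0^\infty|\Phi(y,\lambda,Lf)|^2d_qy\le\nu^{-2}\|Lf\|_2^2$, and because Jackson's integral is atomic one has $|g(q^m)|^2\le\big((1-q)q^m\big)^{-1}\|g\|_2^2$ for every $g\in L^2(\mathbb{R}_q^+)$. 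Hence $\Phi(x,\lambda,Lf)=O(1/|\nu|)$ for fixed $x$ and $\lambda^{-1}\Phi(x,\lambda,Lf)=O(1/|\lambda\nu|)$, the same order as the remainder controlled in Theorem 2, so its integral round each semicircle is $o(1)$ by the identical $\theta$-integral estimate.

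It remains to handle the vertical segments. Mirroring Theorem 2, I would set $\chi(\lambda)=\Phi(x,\lambda)-\sum_{R-1\le\lambda_n\le R+1}\frac{c_n\psi_n(x)}{\lambda-\lambda_n}$, which is regular in $R-1\le\mu\le R+1$ and should satisfy a bound $|\chi(\lambda)|\le\epsilon(R)/|\nu|$; Lemma 5 then upgrades this to $|\chi|\le3\epsilon(R)$ on the segment $(R-i,R+i)$, and since $\int_{R-i}^{R+i}(\lambda-\lambda_n)^{-1}d\lambda=O(1)$ the segment integral is $o(1)$. I expect the main obstacle to be showing $\epsilon(R)\to0$, i.e. $\sum_{R-1\le\lambda_n\le R+1}|c_n\psi_n(x)|\to0$. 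The crucial tool here is the uniform pointwise bound $|\psi_n(x)|\le\big((1-q)q^m\big)^{-1/2}$, again a consequence of atomicity together with $\|\psi_n\|_2=1$ from Proposition 1; combined with $\sum_n c_n^2<\infty$ from Theorem 2 and a Cauchy--Schwarz estimate over the $\lambda_n$ in $[R-1,R+1]$, this forces the tail to vanish, provided the eigenvalues do not accumulate in bounded intervals.

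Finally I would observe that the conclusion can also be read off directly from what is already proved: by Proposition 1 and Theorem 2, $\big\|f-\sum_{n\le N}c_n\psi_n\big\|_2^2=\|f\|_2^2-\sum_{n\le N}c_n^2\to0$, so the partial sums converge to $f$ in $L^2(\mathbb{R}_q^+)$, and the atomic lower bound $|g(q^m)|^2\le\big((1-q)q^m\big)^{-1}\|g\|_2^2$ immediately promotes this $L^2$ convergence to pointwise convergence on $\mathbb{R}_q^+$. This shortcut explains why the pointwise expansion, delicate in the classical Titchmarsh setting, is essentially automatic on the $q$-lattice; the contour argument above is the route that keeps step with the machinery already built in the paper.
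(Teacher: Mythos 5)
Your final paragraph --- the Parseval-plus-atomicity shortcut --- is a complete and correct proof of the theorem, and it is genuinely different from (and more elementary than) the paper's argument. By Proposition 1 and Theorem 2,
$$
\Bigl\|f-\sum_{n\le N}c_n\psi_n\Bigr\|_2^2=\|f\|_2^2-\sum_{n\le N}c_n^2=\sum_{n>N}c_n^2\longrightarrow 0,
$$
and since each point $q^m\in\mathbb{R}_q^+$ carries Jackson mass $(1-q)q^m>0$, evaluation at a lattice point is a bounded linear functional on $L^2(\mathbb{R}_q^+)$; hence mean-square convergence of the partial sums forces convergence at every $x\in\mathbb{R}_q^+$. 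The paper proceeds quite differently and never returns to the contour after Theorem 2: it applies (\ref{6}) with $f$ replaced by $\psi_n$ to identify the Fourier coefficients of $y\mapsto G(x,y,\lambda)$ as $\psi_n(x)/(\lambda_n-\lambda)$, whence the Bessel bound (\ref{8}); it shows that $g=Lf-\lambda f$ has Fourier coefficients $(\lambda_n-\lambda)c_n$, whence (\ref{9}) converges; and it then evaluates $f(x)=\int_0^\infty G(x,y,\lambda)g(y)\,d_qy$ by the bilinear Parseval relation (\ref{7}), the factors $(\lambda_n-\lambda)$ cancelling. What the paper's route buys beyond yours is the absolute convergence of $\sum c_n\psi_n(x)$ and uniform convergence for $x$ in finite intervals, via Cauchy--Schwarz from (\ref{8}) and (\ref{9}); your shortcut gives convergence of the partial sums in their natural order, uniform only on the part of the lattice bounded away from $0$.

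Your primary route, the Titchmarsh contour integration of $\Phi(x,\lambda)$, has a genuine gap exactly where you flag it: the claim $\epsilon(R)\to0$, i.e. $\sum_{R-1\le\lambda_n\le R+1}|c_n\psi_n(x)|\to0$. Cauchy--Schwarz with the uniform atomic bound $|\psi_n(x)|\le((1-q)x)^{-1/2}$ leaves a factor $N(R)^{1/2}$, where $N(R)$ is the number of eigenvalues in $[R-1,R+1]$; the standing hypothesis (that $m$ is meromorphic with simple real poles) makes each $N(R)$ finite but in no way bounded as $R\to\infty$, so this does not close. The repair is to abandon the atomic bound at this step and use the paper's estimates instead:
$$
\sum_{R-1\le|\lambda_n|\le R+1}|c_n\psi_n(x)|\le\Bigl(\sum_{|\lambda_n|\ge R-1}|\lambda_n-i|^2c_n^2\Bigr)^{\frac12}\Bigl(\sum_{n}\frac{\psi_n(x)^2}{|\lambda_n-i|^2}\Bigr)^{\frac12},
$$
where the second factor is bounded by (\ref{8}) with $\lambda=i$ and the first is a tail of the convergent series (\ref{9}), hence $o(1)$. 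But (\ref{8}) and (\ref{9}) are precisely the two facts on which the paper's Parseval argument runs, and once they are in hand the contour machinery is superfluous --- which is presumably why the paper switches methods for this theorem, and why your closing observation is the cleaner way to finish.
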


\begin{proof}
From the formula
$$
(\lambda_{n}-\lambda)\int_{0}^{b}\psi(x,\lambda)\phi(x,\lambda_{n}%
)d_{q}x=W_{0}\{\psi(x,\lambda),\phi(x,\lambda_{n})\}-W_{b}\{\psi
(x,\lambda),\phi(x,\lambda_{n})\},
$$
and (5), it follows that the second term on the right-hand side
tends to $0$ as $b\rightarrow\infty$. Hence we may take
$f(x)=\psi_{n}(x)$ in (\ref{6}), which gives
$$
\int_{0}^{\infty}G(x,y,\lambda)\psi_{n}(x)d_{q}y=\frac{\psi_{n}(x)}%
{\lambda_{n}-\lambda}.
$$
Also
\begin{align*}
&  \int_{0}^{\infty}\left\vert G(x,y,\lambda)\right\vert ^{2}d_{q}y\\
&  =\left\vert \psi(x,\lambda)\right\vert ^{2}\int_{0}^{x}\left\vert
\phi(x,\lambda)\right\vert ^{2}d_{q}y+\left\vert
\phi(x,\lambda)\right\vert ^{2}\int_{x}^{\infty}\left\vert
\psi(x,\lambda)\right\vert ^{2}d_{q}y\leq K,
\end{align*}
say, for $x$ in a finite interval, and $\lambda$ not real. Hence by
the Bessel inequality
\begin{equation}
\sum_{n=0}^{\infty}\left\vert \frac{\psi_{n}(x)}{\lambda_{n}-\lambda
}\right\vert ^{2}\leq K. \label{8}%
\end{equation}
Now let $f$ satisfies the conditions of Theorem 3, and let%
$$
g(x)=Lf(x)-\lambda f(x).
$$
If $\lambda$ is not real,
$$
\int_{0}^{\infty}[\psi(y,\lambda)Lf(y)-f(y)L\{\psi(y,\lambda)\}]d_{q}%
y=W[f,\psi]_{0}^{\infty}=f^{\prime}(0)\cos\alpha-f(0)\sin\alpha=0,
$$
and $L\psi=\lambda\psi.$ Put $\lambda=\lambda_{n}+i\nu$, multiply by $\nu$,
and make $\nu\rightarrow0$. Using Lemma 3, we obtain%
$$
\int_{0}^{\infty}\psi_{n}(y)Lf(y)d_{q}y=\lambda_{n}\int_{0}^{\infty}\psi
_{n}(y)f(y)d_{q}y=\lambda_{n}c_{n}.
$$
Hence, if $d_{n}$ is the Fourier coefficient of $g(x)$,
$$
d_{n}=(\lambda_{n}-\lambda)c_{n},
$$
then
\begin{equation}
\sum_{n=0}^{\infty}\left\vert \lambda_{n}-\lambda\right\vert
^{2}c_{n}^{2}, \label{9}
\end{equation}
is convergente. From (\ref{6}) and (\ref{7}) with $f(y)$ replaced by
$G(x,y,\lambda)$, it follows that%
$$
f(x)=\int_{0}^{\infty}G(x,y,\lambda)g(y)d_{q}y=\sum_{n=0}^{\infty}\frac
{\psi_{n}(x)}{\lambda_{n}-\lambda}(\lambda_{n}-\lambda)c_{n}=\sum
_{n=0}^{\infty}c_{n}\psi_{n}(x),
$$
the required result. The absolute and uniform convergence of the series
follows from (\ref{8}) and the convergence of (\ref{9}).
\end{proof}

\end{document}